\newtheorem{Proposition}{Proposition}
\newtheorem{Definition}{Definition}
\newtheorem{Theorem}{Theorem}
\newtheorem{Lemma}{Lemma}
\newtheorem{Corollary}{Corollary}
\author[1,2,3]{Daniel Solano}
\author[2]{Jerome Darbon}
\author[1,3]{Laurent Younes}
\affil[1]{Johns Hopkins University, Department of Applied Mathematics and Statistics, Baltimore, USA}
\affil[2]{Brown University, Division of Applied Mathematics, Providence, USA}
\affil[3]{Johns Hopkins University, Center for Imaging Science, Baltimore, USA}
\title{Shape Alignment via Allen-Cahn Nonlinear-Convection}
\begin{document}
\maketitle

\begin{abstract}
This paper demonstrates the impact of a phase field method on shape registration  to align shapes of possibly different topology. It yields new insights into the building of discrepancy measures between shapes regardless of topology, which would have applications in fields of image data analysis such as computational anatomy. A soft end-point optimal control problem is introduced whose minimum measures the minimal control norm required to align an initial shape to a final shape, up to a small error term.  The initial data is spatially integrable, the paths in control spaces are integrable and the evolution equation is a generalized convective Allen-Cahn. Binary images are used to represent shapes for the initial data. Inspired by level-set methods and large diffeomorphic deformation metric mapping, the controls spaces are integrable scalar functions to serve as a normal velocity and smooth reproducing kernel Hilbert spaces to serve as velocity vector fields. The existence of mild solutions to the evolution equation is proved, the minimums of the time discretized optimal control problem are characterized, and numerical simulations of minimums to the fully discretized optimal control problem are displayed. The numerical implementation enforces the maximum-bounded principle, although it is not proved for these mild solutions. This research offers a novel discrepancy measure that provides valuable ways to analyze diverse image data sets. Future work involves proving the existence of minimums, existence and uniqueness of strong solutions and the maximum bounded principle. 
\\
\\
\textbf{Keywords:} Phase-field Model; Large Diffeomorphic Deformation Metric Mapping; Level Set Methods, Allen-Cahn Equation; Optimal Control; Bounded Maximum Principle
\end{abstract}

\bibliographystyle{plainnat}
\section{Introduction}
\label{sec:introduction}

In his seminal treatise ``On Growth and Forms'' \cite{thompson1917growth}, D'Arcy Thompson introduced the fundamental principle of analyzing the similarities between two shapes  through the construction of transformations mapping one of them into the other. This principle was later formalized in Grenander's pattern theory \cite{grenander1991shape,grenander1993general}, the theory developing into practical tools with the introduction of diffeomorphic registration \cite{ShapesDiff} and, among others, the large deformation diffeomorphic mapping algorithm \cite{LDDMM3,LDDMM1,trouve1996habilitation,trouve1998diffeomorphism,dupuis1998variational,avants2006geodesic}, or LDDMM. This approach, along with many others targeting diffeomorphic registration \cite{ashburner1999nonlinear,friston1995spatial,DrRu04,vercauteren2009diffeomorphic,ashburner2009computational}, has been used for applications that extract information from shape changes, including, in particular, medical studies based on computational anatomy \cite{grenander1998computational}. 

These methods respect the basic paradigm of D'Arcy Thompson theory of transformations in that they evaluate, for comparison purposes, the ``simplest'' diffeomorphic transformation mapping one shape onto another. Restricting to such transformations has several advantages, one of the foremost being the structure of the diffeomorphism groups and the existence of well-studied Riemannian metrics \cite{arnold1978mathematical,michor2007some}, with transformations conveniently represented as flows of ordinary differential equations \cite{christensen1996deformable, trouve1998diffeomorphism, ShapesDiff}. In addition, diffeomorphisms are easily interpretable, as they can register target shapes to a fixed ``template'' allowing for the transport of existing information (such as region labeling) from the latter to the former.  Moreover, morphometric measures based on the deformation Jacobian are easily computable and can be used in further analysis \cite{ashburner2009computational}. 

Shape spaces represented as sets of diffeomorphic transformations of a template can be build to include a wild variety of objects, albeit all within the same topological class. Eliminating this topological constraint is challenging, however, as it may require to relinquish many of the properties that were just listed. Non-diffeomorphic comparison between images have been introduced in the context of metamorphosis \cite{miller2001group,trouve2005metamorphoses,holm2009euler,richardson2013computing,Casey},  which estimates transformations between images by optimizing displacements both in space and image intensity. There is no canonical modification of this approach to comparing shapes, defined as curves or surfaces, however, in that  running metamorphosis between two images with given zero level, would depend on the choice made for this representation, and not on the level sets themselves. This limitation is partially addressed in \citep{hsieh2022weight}, in which shapes are embedded in a much larger space of varifolds, which can be described, in codimension one, as products of measures on $\mathbb R^d$ (the varifold weight) and transition probabilities from $\mathbb R^d$ to the unit sphere. The metamorphosis mechanism is then applied to the weight component of the varifold. Since not all varifolds correspond to geometric curves or surfaces, this representation does not define, however, shape trajectories. Other work addressing the partial matching problem also include \citep{antonsanti2021partial,sukurdeep2022new}.

Of course, the ability to numerically generate topological changes by tracking the level sets of a function satisfying partial differential equations has a long history \cite{OSHER,osher2001level,osher2004level,sethian1996theory}, but such processes, which have found a wide range of applications in shape regularization, using, in particular, mean-curvature motion \cite{gage1986heat,grayson1987heat,evans1991motion}, and in shape segmentation \cite{chan2001active,vese2002multiphase,chan2006algorithms,malladi2002shape},  have had limited impact in shape analysis, mainly because these equations simplify their solutions (and the associated level sets) in a non-reversible way, which limits their usefulness in transporting spatial information.

In this paper, we provide a new strategy for shape analysis, in which a template shape is controlled by a combination of a diffeomorphic and normal flows while being submitted to a small amount of mean-curvature motion, in order to align with a target shape that is only subject to the mean-curvature motion.
Shapes being described as two- or three-dimensional sets with finite Lebesgue measure, their evolution is described using a  
nonlinear parabolic partial differential evolution equation of their characteristic functions 
We prove the global existence and uniqueness of this parabolic equation---specifically a generalized convective Allen Cahn equation. We set up a time discretization of an optimal control problem with running and terminal cost and prove a Pontryagin’s maximum principle for this problem. And finally, we show some numerical results in which we transform the characteristic function of an initial shape into the mollified characteristic function of a target shape with possibly different topology. Our critical contribution is in designing a meaningful discrepancy measure between shapes that is based on the minimal energy spent transforming one shape into another shape, regardless of whether they share the same topology. 

\Cref{sec:definitions} provides the statement of our main theorem, preceded by the necessary definitions and background results, while \cref{sec:opt.control} sets up the optimal control problem implemented in our experiments. The main theorem is proved in \cref{sec:main_section}. \Cref{sec:opt.cont} describes the discrete-time implementation of the method as an optimal control problems and make explicit the discrete Pontryagin maximum principle providing necessary conditions satisfied by solutions. Finally, \cref{sec:experiments} provides experimental illustrations of the method.

\section{Main theorem}
\subsection{Summary of notation}
If $B$ is a Banach space and $r\geq 1$, $L^r(\Omega, B)$ is the space of $r$-integrable functions (in the Bochner sense) from a measured space $\Omega$ to $B$ with the usual $L^r$ norm that we will denote $\|\ \|_{L^r}$. The notation is reduced to $L^r(\Omega)$ when $\Omega = \mathbb{R}$. We will also only write $L^r$ when $\Omega$ and $B$ are clear from the context. Given $\ell\geq 1$, we let $\ell^* = \ell/(\ell-1)$ such that $1/\ell + 1/\ell^* = 1$. 

The spaces $W^{k,r}(\mathbb R^d, \mathbb R^q)$ are the usual Sobolev spaces of functions $h:\mathbb R^d\to\mathbb R^q$ with all partial derivatives in $L^r$, and norm
\[
\|h\|_{W^{k,r}} = \left(\sum_{\alpha, |\alpha|\leq k} \|\partial_\alpha h\|_r^r\right)^{1/r}
\]
where the sum is over all multi-indexes of order $k$ or less, and $\partial_\alpha$ refers to partial derivatives with respect to coordinates indexed by the indexes in $\alpha$.  We will only write $W^{k,r}$ when $d$ and $q$ are clear from context, and we note that $W^{0,r} = L^r$.

The Banach space of bounded $k$-times continuously differentiable functions from $\mathbb{R}^d$ to $\mathbb{R}^q$ is denoted $C^k(\mathbb R^d, \mathbb R^q)$, with norm
\[
\|h\|_{C^p} = \max_{\alpha: |\alpha|\leq k} \sup_{x\in \mathbb R^d} |h(x)|
\]
(where $|y|$ denotes the Euclidean norm in finite dimensional spaces). The space $C^k_0(\mathbb R^d, \mathbb R^q)$ of functions that tend to 0 at infinity is the completion in $C^k(\mathbb R^d, \mathbb R^q)$ of compactly supported functions. Again, we will only write $C^k$ and $C^k_0$ when domains and codomains are clear from context.

We let $V$ be a Hilbert space of vector fields that is continuously embedded in $C^1(\mathbb{R}^d,\mathbb{R}^d)$. 
Within  large diffeomorphic deformation metric mapping, LDDMM, metrics (distances) are built between embeddings $S_0,S_1\subset \mathbb{R}^d$ of a chosen $C^1$ oriented manifold $M$. For these $S_0,S_1\subset \mathbb{R}^d$, one determines a  $v\in L^2([0,1],V)$ with minimal norm such that its flow, which satisfies\footnote{We will denote time-dependent vector fields either as $t\mapsto v(t)$ where $v(t)$ is a vector field or $t\mapsto v(t, \cdot)$, which is unambiguous within the spaces we will consider.}
$$\varphi_{0t}^v(x) = x + \int_0^t v(s,\varphi_{0s}^v(x)) ds, $$
maps $S_0$ to $S_1$ at time one, i.e., $\varphi^{v}_{01}(S_0) = S_1$. The minimum norm, written  $d_{V}(S_0,S_1)\in [0, +\infty]$, is a metric on the space of embeddings of $M$, written ${\rm Emb}(M,\mathbb{R}^d)$; see \cite{DiffonImm,LDDMM0,LDDMM1,LDDMM2,LDDMM3,LDDMM4} for more details on LDDMM and \cite{Overview,BAUER2019613} for an overview of shape analysis methods. In practice, one relaxes the boundary conditions into a soft end-point problem: minimize
$$\int_0^1 \|v(t)\|^2_{V} dt + U(\varphi^{v}_{01}), \quad v\in L^2([0,1],V),$$ where $U:C^{1}(\mathbb{R}^d,\mathbb{R}^d)\to \mathbb{R}$ is a continuous function bounded from below, which penalizes the lack of alignment between $S_1$ and $\varphi^{v}_{01}(S_0) = S_1$. Various choices are possible; check Chapters 10 and 11 of \cite{ShapesDiff} for more details.

\subsection{Useful properties of convolutions}
The convolution between two functions $f$ and $g$ is denoted $f\star g$. It is a bilinear mapping from $L^1\times L^r$ to $L^r$, which is bounded with \cite{brezis2011functional}
\begin{equation}
\label{eq:conv.basis}
\|f\star g\|_{L^r} \leq \|f\|_{L^1} \|g\|_{L^r}.
\end{equation}

We denote by $\eta_\tau$ the Gaussian function $x\mapsto \dfrac{\exp(-|x|^2/2\tau^2)}{(2\pi)^{d/2} \tau^d}$.  The convolution by $\eta_\tau$ is a bounded linear mapping between various pairs of states, as stated in the following lemma.
\begin{Lemma}(Chapter 15, 1.15, Third Equation,\cite{taylor})
\label{taylorlemma}
Let $q\geq \tilde q\in [1,\infty)$, $k\geq l\in \mathbb{R}$, and $\tau >0$. Then $f \mapsto \eta_\tau \star f$ is a bounded linear mapping from  $W^{l,\tilde q}(\mathbb{R}^d)$ to  $W^{k,q}(\mathbb{R}^d)$ with, for some $C>0$,
$$\|\eta_{\tau} \star f\|_{ W^{k,q}}\leq  C \tau^{-2\gamma}\|f\|_{ W^{l,\tilde q}},$$
$f\in W^{l,\tilde q}(\mathbb{R}^d)$ and where 
$$\gamma  = \dfrac{d}{2}\left(\dfrac{1}{\tilde q}-\dfrac{1}{q}\right)+\dfrac{1}{2}(k-l).$$
\end{Lemma}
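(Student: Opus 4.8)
The plan is to reduce the statement to a single estimate on the $L^p$ norm of a smoothed Gaussian kernel and then close it by scaling. First I would recall the general Young convolution inequality, $\|K\star g\|_{L^q}\le \|K\|_{L^p}\|g\|_{L^{\tilde q}}$, valid whenever $1+1/q = 1/p + 1/\tilde q$ (of which \eqref{eq:conv.basis} is the case $p=1$); since $q\ge \tilde q$, the exponent determined by $1/p = 1-(1/\tilde q - 1/q)$ lies in $[1,\infty]$, so this is legitimate. The key structural fact is that convolution by $\eta_\tau$ is a Fourier multiplier and therefore commutes with the Bessel potential operators $\lb D\rb^s=(I-\Delta)^{s/2}$, whose $L^q$ mapping properties define $\|\cdot\|_{W^{s,q}}$ for real $s$. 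Writing $m=k-l\ge 0$ and factoring $\lb D\rb^k=\lb D\rb^{m}\lb D\rb^{l}$ yields the identity $\lb D\rb^k(\eta_\tau\star f)=(\lb D\rb^{m}\eta_\tau)\star(\lb D\rb^{l}f)$. Taking $L^q$ norms and applying Young's inequality gives
\[
\|\eta_\tau \star f\|_{W^{k,q}} \;\le\; \|\lb D\rb^{m}\eta_\tau\|_{L^p}\,\|\lb D\rb^{l}f\|_{L^{\tilde q}} \;=\; \|\lb D\rb^{m}\eta_\tau\|_{L^p}\,\|f\|_{W^{l,\tilde q}},
\]
so the whole claim reduces to proving $\|\lb D\rb^{m}\eta_\tau\|_{L^p}\le C\tau^{-2\gamma}$.

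Second, I would estimate this kernel norm using the dilation $\eta_\tau(x)=\tau^{-d}\eta_1(x/\tau)$. The homogeneous operator $|D|^m$ (symbol $|\xi|^m$) commutes with dilations up to a power of the scale, giving $|D|^m\eta_\tau(x)=\tau^{-d-m}(|D|^m\eta_1)(x/\tau)$ and hence $\||D|^m\eta_\tau\|_{L^p}=\tau^{-d(1-1/p)-m}\||D|^m\eta_1\|_{L^p}$. Since $\lb\xi\rb^m e^{-|\xi|^2/2}$ is Schwartz, its inverse transform $\lb D\rb^m\eta_1$ (and likewise $|D|^m\eta_1$) is Schwartz, so the constant is finite. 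Because $1-1/p=1/\tilde q-1/q$, the exponent is exactly $-d(1/\tilde q-1/q)-(k-l)=-2\gamma$, the target power. To pass from $|D|^m$ to $\lb D\rb^m$ I would factor $\lb D\rb^m=T_m\,(I+|D|^m)$ through the Fourier multiplier $T_m$ of symbol $\lb\xi\rb^m/(1+|\xi|^m)$, which is bounded on $L^p$ by the Mikhlin--Hörmander theorem; this reduces the kernel norm to $\|\eta_\tau\|_{L^p}$ and $\||D|^m\eta_\tau\|_{L^p}$, contributing $\tau^{-d(1-1/p)}$ and $\tau^{-d(1-1/p)-m}$ respectively, the latter dominating for small $\tau$ to give $C\tau^{-2\gamma}$.

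The main obstacle is precisely this kernel estimate at real (fractional) smoothing order $m=k-l$: a pointwise symbol bound $\lb\xi\rb^m\lesssim 1+|\xi|^m$ does \emph{not} transfer to $L^p$ kernel norms for $p\neq 2$, so one genuinely needs the multiplier boundedness of $T_m$ (valid for $1<p<\infty$, with the endpoints $p=1,\infty$ handled separately using that $\lb D\rb^m\eta_1$ is Schwartz), together with the verification that the homogeneous rescaling of $|D|^m$ is legitimate. A secondary subtlety is that a single power $\tau^{-2\gamma}$ with one constant is natural on a bounded range of $\tau$ — the small-$\tau$ smoothing regime used in the sequel — over which the most singular term controls the rest. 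As a consistency check, in the integer case one can bypass the Fourier argument by writing $\partial_\alpha(\eta_\tau\star f)=(\partial_\beta\eta_\tau)\star(\partial_\rho f)$ with $\alpha=\beta+\rho$ and $|\rho|=\min(|\alpha|,l)$, applying Young's inequality, and using $\|\partial_\beta\eta_\tau\|_{L^p}=\tau^{-d(1-1/p)-|\beta|}\|\partial_\beta\eta_1\|_{L^p}$; summing over $|\alpha|\le k$ reproduces the same dominant power $\tau^{-2\gamma}$.
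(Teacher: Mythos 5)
Your closing ``consistency check'' is, in fact, the paper's entire proof: the paper splits each derivative as $\partial_a(\eta_\tau\star f)=(\partial_c\eta_\tau)\star(\partial_b f)$ with $|b|=\min(|a|,l)$, proves the needed Young-type inequality by hand (via the H\"older splitting $\bar\eta_\tau=\bar\eta_\tau^{1/\tilde q^*}\bar\eta_\tau^{1/\tilde q}$, giving the constant $\|\bar\eta_\tau\|_{L^1}^{1/\tilde q^*}\|\bar\eta_\tau\|_{L^{q/\tilde q}}^{1/\tilde q}$ instead of $\|\bar\eta_\tau\|_{L^p}$), and finishes by Gaussian scaling. Your main line is a genuinely different, Fourier-analytic route: Bessel potentials $\langle D\rangle^s$, the factorization $\langle D\rangle^k(\eta_\tau\star f)=(\langle D\rangle^{k-l}\eta_\tau)\star(\langle D\rangle^l f)$, Young at the exponent $1/p=1-(1/\tilde q-1/q)$, and Mikhlin--H\"ormander to compare $\langle D\rangle^m$ with $|D|^m$. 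What this buys is the fractional orders $k\ge l\in\mathbb{R}$ that the statement formally allows but the paper's multi-index argument does not actually cover; what it costs is heavier machinery and a restriction to $1<p<\infty$ in the multiplier step. You also correctly identify the bounded-$\tau$ caveat, which is equally present (silently) in the paper's proof.

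There is, however, one genuine flaw: the parenthetical claim that $|D|^m\eta_1$ is Schwartz is false for non-integer $m$. Its Fourier transform $|\xi|^m e^{-|\xi|^2/2}$ is not smooth at $\xi=0$, so the kernel decays only polynomially, like $|x|^{-d-m}$. The conclusion you need ($\||D|^m\eta_1\|_{L^p}<\infty$ for all $p\in[1,\infty]$) is still true --- the kernel is bounded because its Fourier transform is in $L^1$, and $|x|^{-d-m}$ is integrable at infinity --- but it must be justified by an actual kernel-decay estimate, not by ``Schwartz.'' This matters because the false claim is exactly what you lean on at the endpoints $p=1$ (the case $q=\tilde q$, which is the case invoked in \cref{cor.taylor}(i)) and $p=\infty$ (the case $\tilde q=1$, $q=\infty$ behind \cref{taylorlemma.2}), where Mikhlin--H\"ormander is unavailable; there you need a uniform-in-$\tau$ bound such as $\|\langle D\rangle^m\eta_\tau\|_{L^1}\le C\tau^{-m}$, which after rescaling amounts precisely to the polynomial-decay estimate above. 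For integer orders these endpoint cases are already covered by your final elementary remark (i.e., by the paper's own argument, with \cref{eq:conv.basis} serving as Young at $p=1$), so every application made in the paper remains justified; but as a proof of the lemma at its stated generality, the endpoint step needs to be repaired as indicated. A last small caveat: identifying $\|\langle D\rangle^k\cdot\|_{L^q}$ with the paper's derivative-based $W^{k,q}$ norm at integer $k$ is itself a Calder\'on--Zygmund fact valid only for $1<q<\infty$, which is harmless here since the paper only uses $q=r^*\in(1,\infty)$ and treats $q=\infty$ through the separate \cref{taylorlemma.2}.
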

\begin{proof}
We give a rapid proof of this result for completeness.
Let $a$ be a $d$-dimensional multi-index with $|a| \leq k$ and choose $b$ with $b \leq a$ and $|b| = \min(|a|, l)$. Let $c = a-b$, so that $|c| \leq k-l$. We have $\partial_a (\eta_\tau \star f) = (\partial_{c} \eta_\tau) \star (\partial_b f)$, and we let below $\bar\eta_\tau = \partial_c \eta_\tau$ and $g = \partial_b f$. We have $\bar \eta_\tau \star g = (\bar \eta_\tau^{1/\tilde q^*} \bar \eta_\tau^{1/\tilde q}) \star g$ yielding 
\[
|\bar \eta_\tau \star g| \leq \|\bar \eta_\tau\|_{L^1}^{1/\tilde q^*} |\bar \eta_\tau \star g^{\tilde q}|^{1/\tilde q}.
\]
This shows that
\[
\|\bar \eta_\tau \star g\|_{L^q} \leq \|\bar \eta_\tau\|_{L^1}^{1/\tilde q^*} \|\bar \eta_\tau \star g^{\tilde q}\|_{L^{q/\tilde q}}^{1/\tilde q} \leq \|\bar \eta_\tau\|_{L^1}^{1/\tilde q^*} \|g^{\tilde q}\|^{1/\tilde q}_{L^1} \|\bar \eta_\tau\|_{L^{q/\tilde q}}^{1/\tilde q} = \|\bar \eta_\tau\|_{L^1}^{1/\tilde q^*} \|\bar \eta_\tau\|_{L^{q/\tilde q}}^{1/\tilde q} \|g\|_{L^{\tilde q}}
\] 
To conclude the proof, one uses the fact that $\eta_\tau(x) = \tau^{-d} \eta_1(x/\tau)$, which implies that $\bar\eta_\tau(x) = \tau^{-d -|c|} \bar \eta_1(x/\tau)$ and
$\|\bar \eta_\tau\|_{L^r} = \tau^{-d -|c| + d/r}\|\bar \eta_1\|_{L^r}$ for $r\geq 1$.
\end{proof}
We will also need similar properties with $C^k_0$ instead of $W^{k,q}$.
\begin{Lemma}
\label{taylorlemma.2}
Let $\tilde q\in [1,\infty)$, $k\geq l\in \mathbb{R}$, and $\tau >0$. Then $f \mapsto \eta_\tau \star f$ is a bounded linear mapping from  $W^{l,\tilde q}(\mathbb{R}^d)$ to $C^k_0(\mathbb R^d)$ with, for some $C>0$,
$$\|\eta_{\tau} \star f\|_{C^k}\leq  C \tau^{-2\gamma}\|f\|_{ W^{l,\tilde q}},$$
$f\in W^{l,\tilde q}(\mathbb{R}^d)$ and where 
$$\gamma  = \dfrac{d}{2\tilde q}+\dfrac{1}{2}(k-l).$$

It is also a bounded linear mapping from  $C^l_0(\mathbb{R}^d)$ to $C^k_0(\mathbb R^d)$ with, for some $C>0$,
$$\|\eta_{\tau} \star f\|_{C^k}\leq  C \tau^{-2\gamma}\|f\|_{C^l},$$
$f\in C^l(\mathbb{R}^d)$ and where 
$$\gamma  = \dfrac{1}{2}(k-l).$$
\end{Lemma}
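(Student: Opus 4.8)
The plan is to imitate the argument just given for \cref{taylorlemma}, replacing the target $L^q$ estimate on the left-hand side by a supremum (an $L^\infty$ estimate) and then upgrading ``bounded continuous'' to ``vanishing at infinity.'' Fix a multi-index $a$ with $|a|\leq k$, pick $b\leq a$ with $|b| = \min(|a|, l)$, and set $c = a-b$ so that $|c|\leq k-l$. As before, differentiation passes onto the kernel, $\partial_a(\eta_\tau\star f) = (\partial_c\eta_\tau)\star(\partial_b f)$, and I write $\bar\eta_\tau = \partial_c\eta_\tau$ and $g = \partial_b f$.

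For the first assertion, rather than splitting $\bar\eta_\tau$ as in the previous proof, I would bound the convolution pointwise by H\"older's inequality, $\|\bar\eta_\tau\star g\|_{L^\infty}\leq \|\bar\eta_\tau\|_{L^{\tilde q^*}}\|g\|_{L^{\tilde q}}$, which already controls $\sup_x|\partial_a(\eta_\tau\star f)(x)|$ by $\|f\|_{W^{l,\tilde q}}$. The $\tau$-dependence then comes entirely from the scaling identity $\|\bar\eta_\tau\|_{L^r} = \tau^{-d-|c|+d/r}\|\bar\eta_1\|_{L^r}$ recalled at the end of the proof of \cref{taylorlemma}: taking $r=\tilde q^*$ and using $d/\tilde q^* = d - d/\tilde q$ gives the exponent $-|c|-d/\tilde q$, whose most negative value over $|c|\leq k-l$ is $-(k-l)-d/\tilde q = -2\gamma$. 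Absorbing $\|\bar\eta_1\|_{L^{\tilde q^*}}$ and the finitely many multi-indices into a single constant $C$ yields the claimed inequality. The second assertion is identical except that H\"older is replaced by \eqref{eq:conv.basis} with $r=\infty$, giving $\|\bar\eta_\tau\star g\|_{L^\infty}\leq \|\bar\eta_\tau\|_{L^1}\|g\|_{L^\infty}\leq \|\bar\eta_\tau\|_{L^1}\|f\|_{C^l}$; here $\|\bar\eta_\tau\|_{L^1} = \tau^{-|c|}\|\bar\eta_1\|_{L^1}$ and the worst exponent $-(k-l)=-2\gamma$ matches $\gamma = (k-l)/2$.

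The one point that needs more than bookkeeping is that the image lies in $C^k_0$, not merely in $C^k$. I would handle this by density: compactly supported smooth functions form a dense subspace of both $W^{l,\tilde q}$ and $C^l_0$, and for such $f$ the convolution $\eta_\tau\star f$ is a Schwartz function, hence decays at infinity along with all its derivatives and lies in $C^k_0$. The norm estimate just established shows that $f\mapsto\eta_\tau\star f$ is a bounded linear map into $C^k$; since $C^k_0$ is, by definition, a closed subspace of $C^k$, continuity forces the whole space to be mapped into $C^k_0$. The main (and rather mild) obstacle is the reading of the real parameters $k\geq l$ inside the multi-index argument: exactly as in \cref{taylorlemma}, the combinatorial step $|b|=\min(|a|,l)$ tacitly treats them as integers, so to be fully rigorous I would either state the estimate for integer orders and recover real orders by interpolation between the resulting endpoint inequalities, or read $|b| = \min(|a|,\lfloor l\rfloor)$ and check that the exponent count leading to $-2\gamma$ is unchanged.
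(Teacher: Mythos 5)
Your proof is correct and follows essentially the same route as the paper's: the paper's entire proof is the remark that one repeats the argument for \cref{taylorlemma} ``essentially taking $q=\infty$ and $\tilde q=\infty$,'' and your H\"older bound $\|\bar\eta_\tau\star g\|_{L^\infty}\leq\|\bar\eta_\tau\|_{L^{\tilde q^*}}\|g\|_{L^{\tilde q}}$ (respectively the $L^1$--$L^\infty$ Young bound) combined with the kernel scaling identity is exactly what that specialization produces. You additionally supply two details the paper leaves implicit --- the density-and-closedness argument showing the image actually lies in $C^k_0$ rather than merely $C^k$, and the caveat about non-integer orders $k,l$ in the multi-index bookkeeping --- and both are handled correctly.
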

\begin{proof}
The proof is similar to that above, essentially taking $q = \infty$, and $\tilde q = \infty$.

\end{proof}

\subsection{Main result}
 \label{sec:definitions}
\begin{Definition}
Define the $C^1$ function $I:\mathbb{R}\to \mathbb{R}$,  for some  $W>0$ , by
\begin{equation}
\label{eq:I}
I(\xi) =  W \xi^2(\xi-1)^2(\xi-1/2)\cdot \mathbf{1}_{[0,1]}(\xi)
\end{equation}
Let $p\in (2,\infty]$, $r\in (\dfrac{dp}{p-2},\infty]$, and $\sigma>0$ be fixed real numbers.     All constants considered below may depend of these numbers and on $W$. 
    
Let  $\mathcal{U} = L^{p}([0,1],L^{r}(\mathbb{R}^d))$, and $\mathcal{V}= L^p([0,1],V)$. 
\end{Definition}

Our main evolution equation is the following generalized convective Allen-Cahn equation. It is a semilinear parabolic PDE whose mild solutions can be written in  integral form by using the heat kernel $\eta_{\sigma\sqrt{t}}$   as  a contraction semigroup generator; see \cite{taylor}, Chapter 15.1, \cite{EvansPDE} 7.4,\cite{InfDimPMP} Chapter 4, \cite{henry81:GTS}).
\begin{Theorem}\label{GlobalExistence2}
   Suppose $f_0 \in L^{r^*}(\mathbb{R}^d)$,  $u\in \mathcal{U} = L^{p}([0,1],L^{r}(\mathbb{R}^d))$, and $v\in \mathcal{V}= L^p([0,1],V)$. There is a unique mild solution $f\in C^{0}([0,1],L^{r^*}(\mathbb{R}^d))\cap C^{0}((0,1],W^{1,{r^*}}(\mathbb{R}^d))$ to the differential equation 
\begin{equation}
\label{eq:main.pde}
\partial_t f(t,x) = \frac{\sigma^2}{2}\Delta f(t,x)+ I(f(t,x)) + u(t,x)|\nabla f(t,x)| - v(t,x)^T \nabla f(t,x),\quad (t,x)\in [0,1]\times\mathbb{R}^d
\end{equation}
given by the integral equation 
$$f(t) = \eta_{\sigma \sqrt{t}}\star f_0 + \int_0^t \eta_{\sigma \sqrt{t-s}}\star \left(I(f(s)) + u(s)|\nabla f(s)| - v^T(s)\nabla f(s)\right)ds,$$
which satisfies
$$\|f\|_P = \max\left(\sup_{t\in [0,1]}\|f(t)\|_{L^{r^*}},\sup_{t\in (0,1]}\sqrt{t}\|f(t)\|_{W^{1,r^*}}\right)<\infty.$$  Moreover, $\lim_{t\to 0} f(t) = f_0$ in $L^{{r^*}}(\mathbb{R}^d)$. 
\end{Theorem}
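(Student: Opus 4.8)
The plan is to realize the mild solution as the unique fixed point of the Duhamel map and to run a contraction argument in the weighted space defined by $\|\cdot\|_P$, first on a short interval $[0,T]$ and then globally by patching. For $T\in(0,1]$ let $P_T$ denote the Banach space of functions $f\in C^{0}([0,T],L^{r^*})\cap C^{0}((0,T],W^{1,r^*})$ with norm $\|f\|_{P_T}=\max\big(\sup_{[0,T]}\|f(t)\|_{L^{r^*}},\ \sup_{(0,T]}\sqrt t\,\|f(t)\|_{W^{1,r^*}}\big)$, and define on it the map
$$\Phi(f)(t)=\eta_{\sigma\sqrt t}\star f_0+\int_0^t \eta_{\sigma\sqrt{t-s}}\star\big(I(f(s))+u(s)|\nabla f(s)|-v(s)^T\nabla f(s)\big)\,ds.$$
I would first record that each term of the nonlinearity is globally Lipschitz in the variables entering it: since $I$ is $C^1$ with support in $[0,1]$ it is bounded and globally Lipschitz with $I(0)=0$, so $\|I(f)-I(g)\|_{L^{r^*}}\le L_I\|f-g\|_{L^{r^*}}$ and $\|I(f)\|_{L^{r^*}}\le L_I\|f\|_{L^{r^*}}$; the embedding $V\hookrightarrow C^1\hookrightarrow L^\infty$ gives $\|v(s)^T\nabla f\|_{L^{r^*}}\le C_V\|v(s)\|_V\|f\|_{W^{1,r^*}}$; and the reverse triangle inequality $\big||\nabla f|-|\nabla g|\big|\le|\nabla f-\nabla g|$ together with H\"older (with $u(s)\in L^r$ and $\nabla f\in L^{r^*}$) gives $\|u(s)|\nabla f|\|_{L^1}\le\|u(s)\|_{L^r}\|f\|_{W^{1,r^*}}$ and the same bound for the differences. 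This reduces the whole theorem to estimating the three Duhamel integrals.

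The core estimates all follow from \cref{taylorlemma}. For the initial term, convolution by the Gaussian is an $L^{r^*}$ contraction and $\|\eta_{\sigma\sqrt t}\star f_0\|_{W^{1,r^*}}\le C\sigma^{-1}t^{-1/2}\|f_0\|_{L^{r^*}}$ (the case $k=1$, $l=0$, $q=\tilde q=r^*$, $\gamma=\tfrac12$), so $\eta_{\sigma\sqrt t}\star f_0\in P_T$. For the Duhamel integrals I would apply \cref{taylorlemma} under the heat kernel: the $I$- and $v$-terms are estimated with $\tilde q=q=r^*$, producing the kernel $(t-s)^{-1/2}$ in the $W^{1,r^*}$ norm, while the $u$-term must be estimated from $L^1$ to $L^{r^*}$ (resp.\ $W^{1,r^*}$), producing the kernel $(t-s)^{-d/(2r)}$ (resp.\ $(t-s)^{-d/(2r)-1/2}$) since there $\gamma=\tfrac d2(1-\tfrac1{r^*})+\tfrac12(k-l)=\tfrac{d}{2r}+\tfrac12(k-l)$. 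After inserting $\|f(s)\|_{W^{1,r^*}}\le s^{-1/2}\|f\|_{P_T}$ and applying H\"older in time with the pair $(p,p^*)$ to split off $\|u(s)\|_{L^r}$ and $\|v(s)\|_V$, the $W^{1,r^*}$ part of $\Phi f$ is bounded, after multiplication by $\sqrt t$, by a sum of terms $t^{\beta}\|u\|_{L^p([0,T],L^r)}\|f\|_{P_T}$ with $\beta=\tfrac12-\tfrac1p-\tfrac{d}{2r}$ for the $u$-term (and strictly positive analogues for the others). Here both hypotheses are used sharply: $p>2$ guarantees that $s^{-p^*/2}$ and $(t-s)^{-p^*/2}$ are integrable, and the condition $r>\tfrac{dp}{p-2}$ is exactly equivalent to the requirement $\big(\tfrac{d}{2r}+\tfrac12\big)p^*<1$ that makes the Beta integral $\int_0^t(t-s)^{-(d/(2r)+1/2)p^*}s^{-p^*/2}\,ds$ converge, equivalently to $\beta>0$.

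Consequently there is a continuous increasing function with $C_1(T)\to0$ as $T\to0$ (through the powers of $T$ and, via absolute continuity of $\int\|u\|_{L^r}^p$ and $\int\|v\|_V^p$, through the local $L^p$ masses of $u$ and $v$) such that $\|\Phi f\|_{P_T}\le C\|f_0\|_{L^{r^*}}+C_1(T)\|f\|_{P_T}$ and $\|\Phi f-\Phi g\|_{P_T}\le C_1(T)\|f-g\|_{P_T}$. Choosing $T$ with $C_1(T)\le\tfrac12$, the map $\Phi$ is a contraction sending the ball of radius $2C\|f_0\|_{L^{r^*}}$ into itself, so Banach's theorem yields a unique fixed point, the mild solution on $[0,T]$. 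Crucially the admissible $T$ depends only on the moduli of continuity of $\int\|u\|_{L^r}^p$ and $\int\|v\|_V^p$, not on $\|f_0\|_{L^{r^*}}$; hence there is a single $\delta>0$ for which the construction works on every subinterval of length $\delta$, and restarting at $\delta,2\delta,\dots$ with the (finite) $L^{r^*}$-data $f(k\delta)$ patches to a solution on all of $[0,1]$ with $\|f\|_P<\infty$. Uniqueness propagates from $[0,T]$ to $[0,1]$ by the usual continuation argument. Finally, continuity into $L^{r^*}$ on $[0,1]$ and the limit $f(t)\to f_0$ follow from strong continuity of the heat semigroup on $L^{r^*}$ and the vanishing of the Duhamel integral as $t\to0$; for continuity into $W^{1,r^*}$ at any $t_*\in(0,1]$ I would rewrite the integral equation from a base point $t_0\in(0,t_*)$, where $f(t_0)\in W^{1,r^*}$ (membership guaranteed by $\|f\|_P<\infty$), so that the leading term $\eta_{\sigma\sqrt{t-t_0}}\star f(t_0)$ is continuous into $W^{1,r^*}$ away from $t_0$ and the remaining integral is handled by dominated convergence.

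The hard part will be the $W^{1,r^*}$ estimate of the convective term $u|\nabla f|$: it is the only place where the gradient of $f$ is paired against the low-regularity control $u$, forcing the detour through $L^1$ and the kernel $(t-s)^{-d/(2r)-1/2}$, and it is precisely the convergence of the resulting time integral that dictates the range $r>\tfrac{dp}{p-2}$. Every other estimate carries slack, so the entire theorem hinges on this single scaling-critical computation.
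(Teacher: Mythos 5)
Your proposal follows essentially the same route as the paper's proof: the same weighted space $P_T$, the same Duhamel fixed-point map, the same use of \cref{taylorlemma} (routing the $u$-term through $L^1$ so the kernel is $(t-s)^{-d/(2r)-1/2}$, while the $I$- and $v$-terms go through $L^{r^*}$ with kernel $(t-s)^{-\epsilon/2}$), H\"older in time with the pair $(p,p^*)$, and the Beta-integral convergence requirement $\bigl(\tfrac{d}{2r}+\tfrac12\bigr)p^*<1$, which is exactly where $p>2$ and $r>\tfrac{dp}{p-2}$ enter (your $\gamma=\tfrac{d}{2r}+\tfrac12(k-l)$ is the correct exponent; the paper's ``$\gamma_1=dr/2$'' is a typo for $d/(2r)$). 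Your key structural observation---that the admissible step size depends only on the absolute continuity of $\int\|u\|_{L^r}^p$ and $\int\|v\|_V^p$ and not on the size of the data---is precisely the mechanism the paper uses (its condition \eqref{cond1}) to restart and patch local solutions into a global one.

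There is, however, one substantive omission. Your contraction gives uniqueness among functions with $\|f\|_{P_T}<\infty$, and your ``usual continuation argument'' only propagates uniqueness within that weighted class; but the theorem (as the paper proves it) asserts uniqueness among \emph{all} mild solutions $f\in C^0([0,1],L^{r^*})\cap C^0((0,1],W^{1,r^*})$, a class in which $\sqrt t\,\|f(t)\|_{W^{1,r^*}}$ could a priori be unbounded as $t\to0$. The paper closes exactly this gap in a dedicated step (``Arbitrary integral solution is element of path space''): for an arbitrary continuity-class solution one sets $A_t=\max_{0<s\le t}\sqrt s\,\|f(s)\|_{W^{1,r^*}}$ and reuses the same kernel estimates to get
\[
A_t \le C_0\|f_0\|_{L^{r^*}} + \left(C_0C_I\Gamma(1/2)^2\sqrt t + C_1K\Bigl(\int_0^t\|u(s)\|_{L^r}^p\,ds\Bigr)^{1/p} + C_VK\Bigl(\int_0^t\|v(s)\|_V^p\,ds\Bigr)^{1/p}\right)A_t,
\]
so that for small $t$ the right-hand factor is at most $\tfrac12$, the term $A_t$ is absorbed, and $A_t\le 2C_0\|f_0\|_{L^{r^*}}$; hence every continuity-class solution automatically lies in $P_T$ and must coincide with the fixed point. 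Without this a priori bound your uniqueness statement is genuinely weaker than the one proved in the paper (if one instead reads the theorem as claiming uniqueness only within the class $\|f\|_P<\infty$, your argument suffices). On the other side of the ledger, your re-basing of the integral equation at an interior time $t_0$ to establish continuity into $W^{1,r^*}$ is a point the paper glosses over, and is a worthwhile addition.
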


\Cref{eq:main.pde} provides our control system, with state $f$ and control $(u,v)\in \mathcal U\times \mathcal V$. The following highlights the specific contributions of its terms.

\begin{enumerate}[label= {\bf \arabic*.},wide]

\item We first note that, in the absence of control (when $u$ and $v$ are identically zero) and when $f_0$ is a characteristic function of some finite-measure $V\subset \mathbb{R}^d$, the function $f$ evolves according to an Allen-Cahn equation, 
$$\partial_t f = \dfrac{\sigma^2}{2}\Delta f + I(f),\quad f_0 = \mathbf{1}_V.$$
This equation approximates a mean curvature flow in the context of phase-field dynamics \cite{NSFPFOverview} as $W\to \infty$ in \cref{eq:I}.  The typical Allen-Cahn equation uses the negative of the derivative of the double well  polynomial $\phi(\xi) = W \xi^2(\xi-1)^2$  (for some $W>0$) in order to push the phase-field toward the values $0$ or $1$. Here $I(\xi)$ has the same sign as $-\phi'(\xi)$ for $\xi\in [0,1]$ and is $C^1$ and compactly supported, which provides numerical stability and an easier proof of \cref{GlobalExistence2}.  

\item This approximate mean-curvature flow regularizes our ``ideal'' control equation 
\begin{equation}
\label{eq:ideal.system}
\partial_t f + \nabla f^T v = |\nabla f| u
\end{equation}
in which the l.h.s. is equal to $\partial_t(f\circ \varphi_{0t}^v) \circ (\varphi_{0t}^v)^{-1}$ where $\varphi_{0t}^v$ is the flow associated with $v$,  and the r.h.s. defines a normal flow of the level sets of $f$, with velocity $u$, which makes possible topological changes. For this reason, we will refer to $v$ as the diffeomorphic control and $u$ as the topological control. \Cref{eq:ideal.system} is reminiscent of the metamorphosis equation, as introduced in \cite{miller2001group,trouve2005metamorphoses,trouve2005local}, that takes the form
\[
\partial_t f + \nabla f^T v = u
\]
with $u\in L^2([0,1], L^2)$. This equation was introduced for image alignment and, unlike \cref{eq:ideal.system}, the contribution of $u$ does not have a clear geometric interpretation. While  the existence of solutions for the metamorphosis system is easily deduced from $\partial_t(f\circ \varphi) = u\circ \varphi$, the analysis of \cref{eq:ideal.system} for general $u\in \mathcal U$ is, up to our knowledge, open, without the introduction of a regularizing term.

\item If we only set $u$ to be identically zero and $f_0$ to be a characteristic function, then \cref{eq:main.pde} is a
a convective Allen-Cahn equation
$$\partial_t f = \dfrac{\sigma^2}{2}\Delta f + I(f) - \nabla f^T v,\quad f_0 = \mathbf{1}_V$$
which has recently been applied in elastic and viscoelastic surfaces in fluids \cite{KLOPPE2024117090}. Non-convergence results for this equation have been obtained if one couples it with a Navier-Stokes method \cite{abels2022non}, and  it has been compared to convection via level-set methods where one resets an evolving distance function \cite{GRAVE2022105569}.

\item Suppose one sets $W=0$, effectively setting $I = 0$, and $f_0 = d_{V^c}$ (the complement distance function of some bounded $V\subset \mathbb{R}^d$). Then \cref{eq:main.pde} becomes
$$\partial_t f = \dfrac{\sigma^2}{2}\Delta f + |\nabla f|u - \nabla f^T v.$$
In this setting if the controls $u,v$ were continuous in $(t,x)$ and bounded, then this equation would be a vanishing viscosity method \cite{Fleming}. Moreover, if $u$ were continuous and bounded and we sent $\sigma \to 0$ and if our initial data was Lipschitz continuous then we would be in the realm of viscosity solutions to Hamilton-Jacobi-Bellman PDEs \cite{OCVSHJPDE,VSParaTimeDiscont}. This method found its inception in \cite{OSHER}  and the most widely used implementation is the ENO/WENO numerical scheme \cite{SHU1,SHU2}.

\end{enumerate}

 For the proof of  existence and uniqueness of mild solutions, we loosely follow the work \cite{Weissler1979,Weissler1980,Weissler1981}, and the outline in \cite{taylor}, Chapter 15.  Our problem differs from those studied there through the introduction of two integrable time-dependent terms, one of which is nonlinear; this results in a Lipschitz constant that is not uniform in $t$ in our contraction mapping proof, requiring finer estimates. In addition, our case of  $u\in L^{p}([0,1], L^r(\mathbb{R}^d))$ is a weaker assumption than in the work  \cite{Lady,First_Initial_BVP} of quasilinear parabolic PDEs.   

\section{Optimal control problem}
\label{sec:opt.control}
Our objective function is defined as follows. Let $f_0,f_{{\rm target}}\in L^{r^*}$ and suppose we have a  $C^1$ functional $U:W^{1,r^*}\to [0,\infty)$ that is bounded below such that $U(f_{{\rm target}}) = 0.$ We fix a positive parameter $C_{\mathrm{top}}$ and minimize
\begin{align*}
&E(u,v) = C_{\mathrm{top}}\int_{0}^1\|u(t)\|^p_{L^r} + \|v(t)\|^p_{V}dt + U(f(1))\text{ over }  (u,v)\in \mathcal{U}\times\mathcal{V}\\
  &\text{subject to }
  f(1) = \eta_{\sigma}\star f_0 + \int_0^1 \eta_{\sigma \sqrt{1-s}}\star\left( I(f(s))+ u(s)|\nabla f(s)| - v(s)^T\nabla f(s)\right)ds
\end{align*}
If such a minimum exists, denote it as $\rho_{\sigma}(f_0,f_{{\rm target}})$. If the minimum exists when we reverse the roles of $f_{{\rm target}}$ and $f_0$, we define the $\sigma$-discrepancy measure by
$$d_{\sigma}(f_0,f_{{\rm target}}) = \min\left(\rho_{\sigma}(f_0,f_{{\rm target}}),\rho_{\sigma}(f_{{\rm target}},f_0)\right).$$
By definition, $d_{\sigma}(f_0,f_{{\rm target}})$ is symmetric and for choice $U$ it also has the property that $f_0 = f_{{\rm target}}\implies d_{\sigma}(f_0,f_{{\rm target}})=0.$ However, we do not claim it satisfies the triangle inequality. 
\par As we show in the coming section, we require $\sigma>0$ to  have existence and uniqueness of mild solutions so that the problem may be well posed. We display numerical simulations in the last section of this paper.

In our experiments, we will compare the numerical minimums of $E$ as we send $C_{\mathrm{top}}$ to infinity. We do this in order to qualitatively study how much of the alignment of $f_0$ and $f_{{\rm target}}$ can be contributed to the large diffeomorphic deformation geometric change caused by $v$ and the non-smooth, geometric and topological change caused by $u$. We consider the parabolic orbits in $W^{1,r*}$ of the smoothed characteristic function that are smoothly convected by $v$, non-smoothly convected by $u$, and smoothed out by the semi-group generator $\eta_{\sigma}$. This is in contrast to LDDMM, where one only seeks a minimal  $v\in \mathcal{V}$  to  align the initial volume $V_0\subset \mathbb{R}^d$ to a target volume $V_{{\rm target}}\subset \mathbb{R}^d$ via the orbit $1_V\mapsto 1_{(\varphi_{01}^v)^{-1}(V)}$. 

 Due to $I$ and $\eta_{\sigma}$, the initial data is also undergoing an approximate mean curvature flow (\cite{NSFPFOverview}), so the shape also changes slightly ``for free." So, all things considered, we are combining LDDMM, level set methods, and phase-field dynamics by using this generalized convective Allen-Cahn evolution equation for shape alignment via a soft-end point optimal control problem.

\section{Existence and Uniqueness of Mild Solutions}
\label{sec:main_section}

We prove \cref{GlobalExistence2} in the following way. 
\begin{enumerate}
    \item Prove the existence of a ``short-time" solution for small enough $T\in (0,1]$ (\cref{localtime} below).
    \item Show that a uniform partition of $[0,1]$ exists with partition mesh $\Delta t< T$ that satisfies a condition that depends on the norms of $u,v$. 
    \item Stitching several ``short-time" fixed point solutions of a single $u,v$ to create the global solution.  
\end{enumerate}


We will use \cref{taylorlemma} in the following special cases that we state as a corollary for easier reference. 
\begin{Corollary}
\label{cor.taylor} 
We let $\epsilon\in \{0,1\}$.
\begin{subequations}
\begin{enumerate}[label=(\roman*)]
\item There exists $C_0>0$ (that depends on $d$ and $\sigma$) such that, for all $f\in L^{r^*}$,
\begin{equation}
\label{eq:taylor.0}
\|\eta_{\sigma \sqrt t} \star f\|_{ W^{\epsilon,r^*}} \leq C_0 t^{-\epsilon/2} \|f\|_{L^{r^*}}
\end{equation}

\item There exists $C_1>0$ (that depends on $d$ and $\sigma$) such that, for all $f\in L^{1}$,
\begin{equation}
\label{eq:taylor.1}
\|\eta_{\sigma \sqrt t} \star f\|_{W^{\epsilon,r^*}} \leq C_1 t^{-\epsilon/2-\gamma_1} \|f\|_{L^{1}}
\end{equation}
with $\gamma_1 = dr/2$. 
\end{enumerate}
\end{subequations}
\end{Corollary}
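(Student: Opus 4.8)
The plan is to obtain both estimates as direct specializations of \cref{taylorlemma} applied with the scaling $\tau = \sigma\sqrt t$. In each case I would fix the source and target Sobolev exponents to read off the admissibility conditions $q\ge \tilde q$ and $k \ge l$, compute the exponent $\gamma$ from the formula in \cref{taylorlemma}, and then substitute $\tau = \sigma\sqrt t$, which turns $\tau^{-2\gamma}$ into $\sigma^{-2\gamma}t^{-\gamma}$. The factor $\sigma^{-2\gamma}$ depends only on $d,\sigma$ and on $\epsilon\in\{0,1\}$, so it is bounded over the two admissible values of $\epsilon$ and can be absorbed into the final constants $C_0,C_1$. Thus the whole argument reduces to two short arithmetic computations of $\gamma$.

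For part (i), I would apply \cref{taylorlemma} with $\tilde q = q = r^*$ and $l = 0$, $k = \epsilon$, so that the map goes from $W^{0,r^*} = L^{r^*}$ into $W^{\epsilon,r^*}$. The admissibility conditions hold trivially ($q = \tilde q$ and $\epsilon \ge 0$), and the exponent is
\[
\gamma = \frac{d}{2}\left(\frac{1}{r^*}-\frac{1}{r^*}\right) + \frac{1}{2}(\epsilon - 0) = \frac{\epsilon}{2}.
\]
Hence $\tau^{-2\gamma} = (\sigma\sqrt t)^{-\epsilon} = \sigma^{-\epsilon}t^{-\epsilon/2}$, and taking $C_0 = C\max(1,\sigma^{-1})$ yields \cref{eq:taylor.0}.

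For part (ii), I would instead take $\tilde q = 1$ and $q = r^*$, with $l = 0$, $k = \epsilon$, so that the map goes from $L^1 = W^{0,1}$ into $W^{\epsilon,r^*}$. Here the only nontrivial hypothesis to verify is $q \ge \tilde q$, i.e. $r^* \ge 1$, which holds for every $r \in (1,\infty]$. Using the conjugacy identity $1/r + 1/r^* = 1$, i.e. $1 - 1/r^* = 1/r$, the exponent becomes
\[
\gamma = \frac{d}{2}\left(1 - \frac{1}{r^*}\right) + \frac{\epsilon}{2} = \frac{d}{2r} + \frac{\epsilon}{2},
\]
so that $\tau^{-2\gamma} = \sigma^{-d/r-\epsilon}\,t^{-\epsilon/2 - d/(2r)}$. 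Absorbing the $\sigma$-power into a constant $C_1$ gives \cref{eq:taylor.1}, with the singular exponent coming out as $\gamma_1 = d/(2r)$.

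I do not expect a genuine obstacle here, since the statement is a pure specialization of \cref{taylorlemma} with no fixed-point or approximation step. The only points deserving care are (a) checking the admissibility condition $q \ge \tilde q$ in part (ii), which is exactly what legitimizes the passage from $L^1$ to $L^{r^*}$, and (b) ensuring that $C_0, C_1$ are uniform over $\epsilon \in \{0,1\}$, which is automatic since $\epsilon$ ranges over a finite set and only enters through a bounded power of $\sigma$. It is worth double-checking the arithmetic of $\gamma$ in part (ii), because the resulting singularity $t^{-d/(2r)}$ is precisely the one whose integrability against the $L^p$-in-time controls drives the smallness condition $r > dp/(p-2)$ exploited later in the contraction argument.
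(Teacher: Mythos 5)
Your proposal is correct and is exactly the argument the paper intends: the corollary is a direct specialization of \cref{taylorlemma} with $\tau = \sigma\sqrt{t}$, taking $(\tilde q, q, l, k) = (r^*, r^*, 0, \epsilon)$ for part (i) and $(1, r^*, 0, \epsilon)$ for part (ii), with the $\sigma$-powers absorbed into $C_0, C_1$ since $\epsilon$ ranges over a finite set. Note that your computed exponent $\gamma_1 = \frac{d}{2r}$ is the correct one, and the ``$\gamma_1 = dr/2$'' appearing in the statement is a typo in the paper, as confirmed by its later requirement $\frac{1}{p^*} - \frac{d}{2r} - \frac{1}{2} > 0$ for the finiteness of $\phi(t,\gamma_1 + 1/2)$.
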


We will use the following notation and remarks.
\begin{enumerate}[label=\textbf{\arabic*.},wide]
    \item Let $C_I$ denote the Lipschitz constant of $I$ (one can take $C_I = W/16$).  Let $C_V$ be a constant for which 
    $$\|w\|_{C^1}\leq C_V \|w\|_{V},\quad \forall w\in V.$$
    \item One has, for $0< a,b < 1$,
$$\int_0^t (t-s)^{-a} s^{-b}ds = \dfrac{\Gamma(1-a)\Gamma(1-b)}{\Gamma(2-(a+b))} t^{1-(a+b)}< \infty$$
where $\Gamma$ is the classical Gamma function. The integral is unbounded as $a \to 1$ or $b\to 1$. Also remark that $\int_0^t (t-s)^{-\gamma} ds = t^{1-\gamma}/(1-\gamma)$ for $\gamma <1$.
    
   \item
    Let $\phi:[0,1]\times (0,1)\to \mathbb{R}$ be defined as 
    \begin{align*}
        \phi(t,\gamma) &= \left(\int_0^t (t-s)^{-\gamma p^*}s^{-\frac{p^*}{2}}ds\right)^{1/p^*}\\
                       &= \left(\dfrac{\Gamma(1-\gamma p^*)\Gamma(1-p^*/2)}{\Gamma(2-(\gamma+1/2)p^*)}\right)^{1/p^*} t^{1/p^*-\gamma - 1/2}.
    \end{align*}
Remark that there is a $K>0$ such that
    $$\max_{t\in [0,1]}\left(\phi(t,1/2),\sqrt{t}\phi(t,\gamma_1), \sqrt{t}\phi(t,\gamma_1+1/2)\right)<K,\quad C_V \left(\dfrac{1}{1-p^*/2}\right)^{1/{p^*}} < K.$$
    Essential for this condition to be true is that $\dfrac{1}{p^*} - \dfrac{d}{2r} - \dfrac{1}{2} > 0$ which is true when $r > \dfrac{dp}{p-2}$ and $p>2$, as assumed in \cref{GlobalExistence2}.

\item For $T\in (0, 1]$, define $P_T$ as the vector space of functions $f\in C^0([0,T],L^{r^*}(\mathbb{R}^d))\cap C^0((0,T],W^{1,{r^*}}(\mathbb{R}^d))$ such that 
\[
\|f\|_{P_T} := \max\left(\sup_{t\in [0,T]}\|f(t)\|_{L^{r^*}},\sup_{t\in (0,T]}\sqrt{t}\|f(t)\|_{W^{1,r^*}}\right)<\infty.
\]
We note that $(P_T, \|\ \|_{P_T})$ is a Banach space and will use the notation
$$d(f,g)_{P_{T}} = \|f-g\|_{P_T}.$$
\end{enumerate}

We will prove local existence of solutions as stated in the following theorem.
\begin{Theorem}\label{localtime}
      Suppose $f_0 \in L^{r^*}(\mathbb{R}^d)$,  $u\in \mathcal{U}$, and $v\in \mathcal{V}$. For small enough $T\in (0,1]$, there is a unique solution $f\in C^{0}([0,T],L^{r^*}(\mathbb{R}^d))\cap C^{0}((0,T],W^{1,{r^*}}(\mathbb{R}^d))$ to the integral equation 
\begin{equation}
\label{eq:main.equation}
f(t) = \eta_{\sigma \sqrt{t}}\star f_0 + \int_0^t \eta_{\sigma \sqrt{t-s}}\star \left(I(f(s)) + u(s)|\nabla f(s)| - v(s)^T\nabla f(s)\right)ds,\quad \forall t \in [0,T]
\end{equation}
which satisfies
$$\max\left(\sup_{t\in [0,T]}\|f(t)\|_{L^{r^*}},\sup_{t\in (0,T]}\sqrt{t}\|f(t)\|_{W^{1,r^*}}\right)<\infty.$$  Moreover, $\lim_{t\to 0} f(t) = f_0$ in $L^{{r^*}}(\mathbb{R}^d)$ 
\end{Theorem}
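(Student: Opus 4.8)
The plan is to realize the solution as the unique fixed point of the Duhamel operator
$$
\Phi(f)(t) = \eta_{\sigma\sqrt t}\star f_0 + \int_0^t \eta_{\sigma\sqrt{t-s}}\star\Big(I(f(s)) + u(s)|\nabla f(s)| - v(s)^T\nabla f(s)\Big)\,ds
$$
on the Banach space $P_T$, and to apply the Banach fixed point theorem. First I would check that $\Phi$ is well defined on $P_T$ by estimating the three source terms separately. Since $I$ is $C_I$-Lipschitz with $I(0)=0$, one has $\|I(f(s))\|_{L^{r^*}}\le C_I\|f(s)\|_{L^{r^*}}$; Hölder's inequality with conjugate exponents $r,r^*$ gives $\|u(s)|\nabla f(s)|\|_{L^1}\le \|u(s)\|_{L^r}\|\nabla f(s)\|_{L^{r^*}}$; and the embedding of $V$ into $C^1$ gives $\|v(s)^T\nabla f(s)\|_{L^{r^*}}\le C_V\|v(s)\|_V\|\nabla f(s)\|_{L^{r^*}}$. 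Feeding these into \cref{cor.taylor} — using \eqref{eq:taylor.0} for the two $L^{r^*}$ terms and \eqref{eq:taylor.1} for the $L^1$ term, with $\epsilon\in\{0,1\}$ according to whether $\Phi(f)(t)$ is measured in $L^{r^*}$ or in $W^{1,r^*}$ — and bounding $\|\nabla f(s)\|_{L^{r^*}}\le s^{-1/2}\|f\|_{P_T}$ reduces everything to time integrals of the form $\int_0^t (t-s)^{-a}s^{-1/2}(\cdots)\,ds$.

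The core of the argument is to control these time integrals. For the $u$ and $v$ terms I would apply Hölder in time with exponents $p$ and $p^*$ to extract $\|u\|_{L^p([0,T],L^r)}$ and $\|v\|_{L^p([0,T],V)}$, leaving the kernel integrals $\phi(t,\gamma)$ from the remarks. The hypotheses $p>2$ and $r>dp/(p-2)$ are exactly what force $\tfrac1{p^*}-\tfrac{d}{2r}-\tfrac12>0$ together with $\gamma p^*<1$ and $p^*/2<1$, so that $\phi(t,1/2)$, $\sqrt t\,\phi(t,\gamma_1)$ and $\sqrt t\,\phi(t,\gamma_1+1/2)$ are bounded by $K$ on $[0,1]$. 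Collecting terms yields $\|\Phi(f)\|_{P_T}\le C_0\|f_0\|_{L^{r^*}}+\kappa(T)\|f\|_{P_T}$, so $\Phi(0)\in P_T$, and the same computation applied to differences gives $\|\Phi(f)-\Phi(g)\|_{P_T}\le \kappa(T)\|f-g\|_{P_T}$, where the factor $\kappa(T)$ gathers $C_0,C_1,C_I,C_V,K$ with $\|u\|_{L^p([0,T])}$, $\|v\|_{L^p([0,T])}$ and positive powers of $T$. For the difference estimate the only new ingredient is the reverse triangle inequality $\big||\nabla f|-|\nabla g|\big|\le |\nabla(f-g)|$, which keeps the normal-velocity term Lipschitz. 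Since $\|u\|_{L^p([0,T])}$ and $\|v\|_{L^p([0,T])}$ tend to $0$ as $T\to0$ by absolute continuity of the integral, and the surviving powers of $T$ are positive, I can pick $T\in(0,1]$ with $\kappa(T)<1$; Banach's theorem then yields a unique fixed point $f\in P_T$, and because the Lipschitz bound holds on all of $P_T$ the solution is unique in $P_T$ rather than merely in a ball.

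It remains to confirm the regularity and initial-condition claims. Membership of $\Phi(f)$ in $C^0([0,T],L^{r^*})\cap C^0((0,T],W^{1,r^*})$ — part of the definition of $P_T$ — follows from strong continuity of the semigroup $t\mapsto \eta_{\sigma\sqrt t}\star(\cdot)$ and dominated convergence for the Duhamel integral, the same kernel bounds supplying an integrable dominating function despite the weights $(t-s)^{-a}s^{-1/2}$. For the initial value, $\eta_{\sigma\sqrt t}\star f_0\to f_0$ in $L^{r^*}$ as $t\to0$ because $\eta_{\sigma\sqrt t}$ is an approximate identity, while the Duhamel integral is dominated by a positive power of $t$ times $\|f\|_{P_T}$ and hence vanishes, giving $\lim_{t\to0}f(t)=f_0$ in $L^{r^*}$.

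I expect the main obstacle to be the simultaneous handling of the two singularities: the $s^{-1/2}$ blow-up of $\|\nabla f(s)\|_{L^{r^*}}$ near $s=0$ — all that $P_T$ controls — and the merely $L^p$-in-time (not $L^\infty$) regularity of the controls, which is what forces the Hölder-in-time split and renders the Lipschitz constant nonuniform in $t$. Establishing that the kernel integrals $\phi(t,\gamma)$ are finite and uniformly bounded, i.e. that $\tfrac1{p^*}-\tfrac{d}{2r}-\tfrac12>0$ in the stated ranges, is the delicate point on which the entire contraction rests.
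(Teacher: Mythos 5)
Your contraction argument reproduces the paper's proof almost step for step: the same splitting of the Duhamel operator into the $f_0$, $I$, $u$ and $v$ pieces, the same space estimates (the Lipschitz bound on $I$, H\"older pairing $\|u(s)\|_{L^r}$ with $\|\nabla f(s)\|_{L^{r^*}}$ in $L^1$, the embedding $V\hookrightarrow C^1$ for the transport term, the smoothing bounds of \cref{cor.taylor} with $\epsilon\in\{0,1\}$), the same H\"older-in-time split with exponents $p,p^*$ producing the $\phi(t,\gamma)$ integrals, the same exponent condition $\tfrac1{p^*}-\tfrac{d}{2r}-\tfrac12>0$ (equivalent to $r>dp/(p-2)$, $p>2$), the reverse triangle inequality for the $u$-term, and the same smallness mechanism (absolute continuity of $\int\|u\|^p$ and $\int\|v\|^p$ plus positive powers of $T$), followed by Banach's fixed point theorem and the mollifier argument for the initial condition.

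There is, however, one genuine omission relative to the statement being proved. Banach's theorem gives you existence, and uniqueness \emph{among functions already in $P_T$}, i.e. among solutions that a priori satisfy the weighted bound $\sup_{t\in(0,T]}\sqrt t\,\|f(t)\|_{W^{1,r^*}}<\infty$; you say this explicitly ("unique in $P_T$ rather than merely in a ball"). But the theorem asserts uniqueness in the class $C^0([0,T],L^{r^*})\cap C^0((0,T],W^{1,r^*})$, with the weighted bound appearing as a \emph{conclusion}, not a hypothesis, and the paper spends a dedicated step ("arbitrary integral solution is element of path space") closing exactly this gap: for any continuity-class solution $f$ of the integral equation one sets $A_t=\sup_{0<s\le t}\sqrt s\,\|f(s)\|_{W^{1,r^*}}$, re-runs the four estimates on the identity $f=\Phi(f)$ to get $A_t\le C_0\|f_0\|_{L^{r^*}}+\kappa(t)\,A_t$ with $\kappa(t)\le\tfrac12$ for $t$ small, and absorbs to conclude $A_t\le 2C_0\|f_0\|_{L^{r^*}}$, hence $f\in P_T$ and $f$ coincides with the fixed point. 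Without this step a continuous solution whose $W^{1,r^*}$-norm blows up faster than $t^{-1/2}$ as $t\to0$ could a priori escape your uniqueness claim. The step is also load-bearing later: in the stitching proof of \cref{GlobalExistence2} the globally assembled function is only known to be a continuity-class solution of the integral equation on each subinterval, and it is this identification that lets the paper call it \emph{the} solution.
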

\begin{proof}
    Recall that $p\in (2,\infty]$, $r\in (dp/(p-2),\infty]$, and $\sigma>0$. Take  $u\in \mathcal{U}$, $f_0 \in L^{r^*}$, and $v\in \mathcal{V}$. We set up a contraction mapping argument where the mild solution we seek is the unique fixed point of said mapping.

 We fix $T\in (0, 1]$ and define $\Psi:P_T \to P_T$ by 
    $$\Psi(f)(t) = \eta_{\sigma \sqrt{t}}\star f_0 + \int_0^t \eta_{\sigma \sqrt{t-s}}\star \left(I(f(s)) + u(s)|\nabla f(s)| - v^T(s)\nabla f(s)\right)ds.$$

We also introduce the notation, for $u\in \mathcal U$, $v\in \mathcal V$:
\begin{subequations}
\begin{align}
\label{eq:psi.0}
\Psi_0&: t \mapsto \eta_{\sigma \sqrt t} \star f_0 \\
\label{eq:psi.I}
\Psi_I(f)&: t \mapsto \int_0^t \eta_{\sigma \sqrt{t-s}} \star I(f(s)) ds \\
\label{eq:psi.u}
\Psi_u(f)&: t \mapsto \int_0^t \eta_{\sigma \sqrt{t-s}} \star (u(s)|\nabla f(s)|) ds \\
\label{eq:psi.v}
\Psi_v(f)&: t \mapsto \int_0^t \eta_{\sigma \sqrt{t-s}} \star (v(s)^T\nabla f(s)) ds ,
\end{align}    
\end{subequations}
so that $\Psi(f) = \Psi_0 + \Psi_I(f) + \Psi_u(f) - \Psi_v(f)$.

\subsubsection*{Preliminary lemma}

We first prove the following lemma and its corollary.
\begin{Lemma}
\label{lem:ineq.psi}
\begin{subequations}
Let $\epsilon \in \{0,1\}$. For $f,g \in C^0([0,T], L^{r^*}) \cap  C^0((0,T], W^{1,r^*})$, one has, for all $t\in (0, T]$,
\begin{align}
\label{eq:ineq.I}
&\int_0^t \|\eta_{\sigma \sqrt{t-s}}\star (I(f(s))-I(g(s)))\|_{W^{\epsilon, r^*}} ds  \leq C_0C_I \int_0^t (t-s)^{-\epsilon/2}\|f(s) - g(s)\|_{L^{r^*}}ds\\
\label{eq:ineq.u}
&\int_0^t \|\eta_{\sigma\sqrt{t-s}} \star (u(s) (|\nabla f(s)| - |\nabla g(s)|))\|_{W^{\epsilon, r^*}} ds \leq \\
\nonumber
&\qquad\qquad\qquad C_1K\left(\int_0^t\|u(s)\|^{p}_{L^{r}} s^{p/2} \|f(s) - g(s)|\|^p_{W^{1,r^*}}ds\right)^{1/p}\\
\label{eq:ineq.v}
&\int_0^t \|\eta_{\sigma\sqrt{t-s}} \star (u(s)^T \nabla f(s))\|_{W^{\epsilon, r^*}} ds \leq \\
\nonumber
&\qquad\qquad\qquad C_0 C_VK \left(\int_0^t\|v(s)\|_V^{p} s^{p/2}\|f(s) - g(s)|\|^p_{W^{1,r^*}} ds\right)^{1/p}
\end{align}
\end{subequations}
\end{Lemma}

\begin{Corollary}
\label{cor:ineq.psi}
The mapping $\Psi$ maps $P_T$ into itself. Moreover,
given $f,g\in P_T$, one has
\begin{subequations}
\begin{align}
\label{eq:ineq.c0}
\|\Psi_0\|_{P_T} &\leq C_0 \|f_0\|_{L^{r^*}}\\
\label{eq:ineq.cI}
\|\Psi_I(f)-\Psi_I(g)\|_{P_T} &\leq 2C_0C_I T \|f-g\|_{P_T} \\
\label{eq:ineq.cu}
\|\Psi_u(f)-\Psi_u(g)\|_{P_T} &\leq C_1K\|f-g\|_{P_T}  \left(\int_0^T\|u(s)\|^{p}_{L^{r}}ds\right)^{1/p}\\
\label{eq:ineq.cv}
\|\Psi_v(f)\|_{P_T} &\leq C_0C_VK\|f\|_{P_T}  \left(\int_0^T\|v(s)\|_V^{p}ds\right)^{1/p}
\end{align}
\end{subequations}
\end{Corollary}

\begin{proof}[Proof of \cref{lem:ineq.psi} and \cref{cor:ineq.psi}]

Take $\epsilon\in \{0,1\}$. 
Using \cref{eq:taylor.0}, we have $\Psi_0(t) = \eta_{\sigma \sqrt t} \star f_0 \in W^{\epsilon, r^*}$ with
\[
\|\Psi_0(t)\|_{W^{\epsilon, r^*}} \leq C_0 t^{-\epsilon/2} \|f_0\|_{L^{r^*}},
\]
 which proves \cref{eq:ineq.c0}.

Switching to $\Psi_I$, we have, for all $s\in [0, T]$, 
\[
|I(f(s)) - I(g(s))| \leq C_I |f(s) - g(s)|
\]
so that $I(f(s)) - I(g(s)) \in L^{r^*}$  and, using \cref{eq:taylor.0} again, 
$\eta_{\sigma \sqrt{t-s}} \star (I(f(s))-I(g(s))) \in W^{\epsilon, r^*}$ with
\begin{align*}
\|\eta_{\sigma \sqrt{t-s}}\star (I(f(s))-I(g(s)))\|_{W^{\epsilon, r^*}} &\leq 
C_0(t-s)^{-\epsilon/2}\|I(f(s)) - I(g(s))\|_{L^{r^*}}\\
& \leq 
C_0C_I (t-s)^{-\epsilon/2}\|f(s) - g(s)\|_{L^{r^*}}.
\end{align*}
This shows that
\begin{align*}
\|\Psi_I(f(t))-\Psi_I(g)(t)\|_{W^{\epsilon, r^*}} &\leq \int_0^t \|\eta_{\sigma \sqrt{t-s}}\star (I(f(s))-I(g(s)))\|_{W^{\epsilon, r^*}}ds\\
& \leq C_0C_I \int_0^t (t-s)^{-\epsilon/2}\|f(s) - g(s)\|_{L^{r^*}}ds,
\end{align*}
which proves \cref{eq:ineq.I}, and \cref{eq:ineq.cI} is derived from $\|f(s) - g(s)\|_{L^{r^*}} \leq \|f-g\|_{P_T}$ and 
\[
\int_0^t (t-s)^{-\epsilon/2} = t^{1-\epsilon/2}/(1-\epsilon/2).
\]

Switching to $\Psi_u$, we have, for almost all $s\in [0, T]$, 
$u(s) |\nabla f(s)| \in L^1$ with
\[
\|u(s) |\nabla f(s)|\|_{L^1} \leq \|u(s)\|_{L^{r}} \|f(s)\|_{W^{1, r^*}}.
\]
It follows from  \cref{eq:taylor.1} that 
\[
\eta_{\sigma\sqrt{t-s}} \star (u(s) (|\nabla f(s)| - |\nabla g(s)|)) \in W^{\epsilon, r^*}
\]
with 
\begin{align*}
\|\eta_{\sigma\sqrt{t-s}} \star (u(s) (|\nabla f(s)| - |\nabla g(s)|))\|_{W^{\epsilon, r^*}} &\leq C_1 (t-s)^{-\epsilon/2-\gamma_1}\|u(s)(|\nabla f(s)| - |\nabla g(s)|)\|_{L^{1}}\\
&\leq C_1 (t-s)^{-\epsilon/2-\gamma_1}\|u(s)(|\nabla f(s) - \nabla g(s)|)\|_{L^{1}}\\
&\leq C_1 (t-s)^{-\epsilon/2-\gamma_1}\|u(s)\|_{L^{r}} \|f(s) - g(s)|\|_{W^{1,r^*}}\\
\end{align*}
It follows that
\begin{align*}
\|\Psi_u(f)(t)-\Psi_u(g)(t)\|_{W^{\epsilon, r^*}} & 
\leq C_1 \int_0^t(t-s)^{-\epsilon/2-\gamma_1}s^{-1/2} \|u(s)\|_{L^{r}} s^{1/2} \|f(s) - g(s)|\|_{W^{1,r^*}} ds\\
&\leq C_1\phi(t,\epsilon/2+ \gamma_1) \left(\int_0^t\|u(s)\|^{p}_{L^{r}} s^{p/2} \|f(s) - g(s)|\|^p_{W^{1,r^*}} ds\right)^{1/p}\\
&\leq C_1K \left(\int_0^t\|u(s)\|^{p}_{L^{r}} s^{p/2} \|f(s) - g(s)|\|^p_{W^{1,r^*}}ds\right)^{1/p},
\end{align*}
 which yields \cref{eq:ineq.u}, and \cref{eq:ineq.cu} is a consequence of 
$s^{1/2} \|f(s) - g(s)|\|_{W^{1,r^*}} \leq \|f-g\|_{P_T}$.

Similarly, for almost all $s\in [0, T]$, we have $v(s)^T \nabla f(s) \in L^{r^*}$ with
\[
\|v(s)^T \nabla f(s)\|_{L^{r^*}} \leq \|v(s)\|_{C^0} \|f(s)\|_{W^{1, r^*}} \leq C_V \|v(s)\|_{V} \|f(s)\|_{W^{1, r^*}}.
\]
Using  \cref{eq:taylor.0}, we get 
\[
\eta_{\sigma\sqrt{t-s}}\star (v(s)^T \nabla f(s)) \in W^{\epsilon, r^*}
\]
with
\[
\|\eta_{\sigma\sqrt{t-s}}\star (v(s)^T \nabla f(s))\|_ {W^{\epsilon, r^*}}
\leq C_0  C_V \|v(s)\|_{V}  (t-s)^{-\epsilon/2}  s^{-1/2} s^{1/2} \|f(s)\|_{W^{1, r^*}}.
\]
Finally, 
    \begin{align*}
    \|\Psi_v(f)(t)\|_{W^{\epsilon/2,r^*}} &
   \leq C_0 C_V  \phi(t, \epsilon/2)  \left(\int_0^t \|v(s)\|^{p}_{V} s^{p/2} \|f(s)\|^p_{W^{1, r^*}} ds \right)^{1/p} \\
        &\leq C_0 C_VK  \left(\int_0^t \|v(s)\|^{p}_{V} s^{p/2} \|f(s)\|^p_{W^{1, r^*}} ds\right)^{1/p} ,
    \end{align*}
    which  gives \cref{eq:ineq.v,eq:ineq.cv}.

If $f\in P_T$, these equations (taking $g=0$) imply that $\sup_{t\in (0,T]} t^{\epsilon} \|\Psi(f)(t)\|_{W^{1, r^*}} < \infty$. Obviously, $\Psi(f) \in C^0((0, T], W^{\epsilon, r^*})$. But \crefrange{eq:ineq.cI}{eq:ineq.cv} show that $\Psi(f)(t)$ has the same limit (in $L^{r^*}$) as $\eta_{\sigma\sqrt t} \star f_0$ when $t\to 0$, which is equal to $f_0$ from standard properties of mollifiers, showing that $\Psi(f) \in C^0([0, T], L^{r^*})$. 
\end{proof}
    
\subsubsection*{Well defined contraction}
A direct  consequence of \cref{cor:ineq.psi} is that, if $T$ is small enough, such that, say, 
\begin{equation}
\label{eq:contraction}
2C_0 C_I T + 
C_2K \left(\int_0^T\|u(s)\|^{p}_{L^{r}}ds\right)^{1/p} +
C_0C_VK  \left(\int_0^T\|v(s)\|_V^{p}ds\right)^{1/p} < \frac 12
\end{equation}
then $\Psi$ is a contraction on $P_T$. 
Choosing such a $T$, we obtain the fact that there is a unique solution 
to \cref{eq:main.equation} in $P_T$.

\subsubsection*{Arbitrary integral solution is element of path space}
We show that if a function $f\in C^0([0,T'], L^{r^*}) \cap  C^0((0,T'], W^{1,r^*}) $ with $0<T\leq T'\leq 1$ is such that 
$$f(t) = \eta_{\sigma \sqrt{t}}\star f_0 + \int_0^t \eta_{\sigma \sqrt{t-s}}\star \left(I(f(s)) + u(s)|\nabla f(s)| - v^T(s)\nabla f(s)\right)ds$$
then $f|_{[0,T]} \in P_{T}$, which implies that the function $f$ is the unique fixed point of $\Psi:P_{T}\to P_{T}$. This boils down to showing that $\sqrt{t} \|f(t)\|_{W^{1, r^*}}$ remains bounded when $t \to 0$. 

Let $A_t = \max_{0<s\leq t} (\sqrt{s} \|f(s)\|_{W^{1, r^*}})$. Using \cref{eq:ineq.c0} and \crefrange{eq:ineq.I}{eq:ineq.v}, we get, for $0\leq s \leq t \leq T$:
$\sqrt s \|\Psi_0(s)\|_{W^{1, r^*}} \leq C_0 \|f_0\|_{L^{r^*}}$ and
\begin{align*}
\sqrt s \|\Psi_I(f)(s)\|_{W^{1, r^*}}& \leq C_0 C_I \sqrt s \left(\int_0^s(s-\tilde s)^{-1/2} \tilde s^{-1/2} d\tilde s\right) A_s  = C_0 C_I \sqrt s \Gamma(1/2)^2 A_s\\
\sqrt s \|\Psi_u(f)(s)\|_{W^{1, r^*}} & \leq 
C_1K\left(\int_0^s\|u(\tilde s)\|^{p}_{L^{r}} d\tilde s\right)^{1/p} A_s\\
\sqrt s \|\Psi_v(f)(s)\|_{W^{1, r^*}} & \leq
C_VK \left(\int_0^s\|v(\tilde s)\|_V^{p} d\tilde s\right)^{1/p} A_s
\end{align*}
Since $f(t) = \Psi(f)(t)  = \Psi_0 + \Psi_I(f)(t) + \Psi_u(f)(t) - \Psi_v(f)(t)$, we get (taking the supremum over $s\leq t$)
\[
A_t \leq C_0 \|f_0\|_{L^{r^*}} + \left(C_0 C_I \Gamma(1/2)^2 \sqrt t+
C_1K\left(\int_0^t\|u(s)\|^{p}_{L^{r}} ds\right)^{1/p}
+ C_VK \left(\int_0^t\|v(s)\|_V^{p} ds\right)^{1/p}\right) A_t.
\]
Taking $t_0$ small enough so that
\[
C_0 C_I \Gamma(1/2)^2 \sqrt {t_0}+
C_1K\left(\int_0^{t_0}\|u(s)\|^{p}_{L^{r}} ds\right)^{1/p}
+ C_VK \left(\int_0^{t_0}\|v(s)\|_V^{p} ds\right)^{1/p}\leq \frac12
\]
we get $A_t \leq 2C_0 \|f_0\|_{L^{r^*}}$ for all $t\leq t_0$, showing that $f\in P_T$.

\end{proof}

\subsubsection*{Global existence}

\begin{proof}[Proof of Theorem \ref{GlobalExistence2}]
Remark that in the previous proof, for any $f_0 \in L^{r^*}$, $u\in \mathcal{U}$, $v\in \mathcal{V}$ the short time existence of a solution to the integral equation depended on picking $T$ such that \cref{eq:contraction} is satisfied. Taking the $p$th power of this equation, and using the inequality $a+b \leq 2^{1/p^*} (|a|^p + |b|^p)$, this equation is satisfied as soon as 
\[
h(T) := 
CT + 
C\int_0^T\|u(s)\|^{p}_{L^{r}}ds +
C\int_0^T\|v(s)\|_V^{p}ds \leq \frac12,
\]
with $C = \max(2C_0C_I, 2^{1/p^*} C_1K, 2^{1/p^*} C_0C_VK)$. Since $h$
is continuous in $T$ over $[0,1]$,  there exists (by uniform continuity) a partition $0=t_0<t_1<...<t_N = 1$ with fixed mesh size $\Delta t = t_2 - t_1$ on $[0,1]$ so that for each $i = 0,...,N-1$ we have $|h(t_{i+1}) - h(t_i)| \leq 1/2$, i.e., 
   \begin{equation}\label{cond1}
C\Delta t+      C\int_{t_i}^{t_{i+1}} \|u(s)\|_{L^r}^p ds + C\int_{t_i}^{t_{i+1}}\|v(s)\|_{V}^p ds < \dfrac{1}{2}.
   \end{equation}
We define the functions $u_i\in \mathcal{U},v_i\in \mathcal{V}$ given for each $i=0,...,N-1$ by 
    $$u_i(t) = u(t + t_i),\, t\in [0,
    \Delta t],\quad u_i(t) = 0,\, t>\Delta t,$$
    $$v_i(t) = v(t + t_i),\, t\in [0,
    \Delta t],\quad v_i(t) = 0,\, t>\Delta t.$$
Given Equation \ref{cond1} and the fact that 
$$\int_{t_{i}}^{t_{i+1}} \|u(s)\|_{L^r}^p + \|v(s)\|_{V}^p ds = \int_{0}^{\Delta t} \|u_i(s)\|_{L^r}^p + \|v_i(s)\|_{V}^p ds$$ 
for each $i$, then each $u_i,v_i$ emit a local integral solution for any given initial data inside of $ L^{r^*}$. So let $f^{1}$ denote the unique solution to the integral equation using $u_0,v_0,f_0$. We then solve the integral equation $N-1$ times iteratively for each $i = 1,...,N-1$. That is, we define $f^{i+1}$ as the short time solution of the integral equation using  using $u_i,v_i,f_i$.
  Then define $f\in C^{0}([0,1],L^{r^*}(\mathbb{R}^d))\cap C^{0}((0,1],W^{1,{r^*}}(\mathbb{R}^d))$ by 
   $$f(t) = f^i(t-t_{i}),\,t\in [t_{i-1},t_{i}],i=1,...,N.$$
   By definition for $t\in [t_{i},t_{i+1}]$, this $f$ satisfies the condition that 
   \begin{align*}
       f(t) &= \eta_{\sigma \sqrt{t-t_i}}\star f^{i}(\Delta t) +\int_{t_i}^{t_{i+1}}\eta_{\sigma \sqrt{t-s}} \star\left( I(f^{i+1}(s)) + u_i(s)|\nabla f^{i+1}(s)|- v_i(s)^T\nabla f^{i+1}(s)\right)ds\\
       &= \eta_{\sigma \sqrt{t}}\star f(t) +\int_{0}^{t}\eta_{\sigma \sqrt{t-s}} \star \left(  I(f(s)) +u(s)|\nabla f(s)|- v(s)^T\nabla f(s)\right)ds,
   \end{align*}
which can be shown by using the following iteration, convolution properties of Gaussians,  and the definitions of $u_i,v_i$
   $$f^{i}(\Delta t) = \eta_{\sigma \sqrt{\Delta t}}\star f^{i-1}(\Delta t) +\int_{0}^{\Delta t}\eta_{\sigma \sqrt{\Delta t-s}} \star \left( I(f^{i}(s) + u_{i-1}(s)|\nabla f^{i}(s)|-v_{i-1}(s)^T\nabla f^{i}(s)\right)ds.$$
   This function $f$ is the only function that satisfies this global integral equation since at each time $t$ we have $f(t) = f^i(t-t_{i})$ where $f^i(t-t_{i})$ uniquely solves the local integral equation. Moreover
    \begin{multline*}
        \max\left(\sup_{t\in [0,1]}\|f(t)\|_{L^{r^*}} ,\sup_{t\in (0,1]}\sqrt{t}\|f(t)\|_{W^{1,r^*}} \right) \leq\\
        \max_{i=0,...,N-1}\left(\max(\sup_{t\in [t_{i},t_{i+1}]}\|f^{i+1}(t)\|_{L^{r^*}} ,\sup_{t\in (t_{i},t_{i+1}]}\sqrt{t}\|f^{i+1}(t)\|_{W^{1,r^*}} )\right)<\infty.
    \end{multline*}
    And, lastly, $\lim_{t\to 0}f(t)=f_0$ almost everywhere in $\mathbb{R}^d$ as
    $$\lim_{t\to 0}\|f(t)-f_0\|_{L^{r^*}} = \lim_{t\to 0}\|f^1(t)-f_0\|_{L^{r^*}} = 0.$$
\end{proof}

\section{A discrete-time optimal control problem}
\label{sec:opt.cont}
\subsection{Discrete-time evolution}
Let $T$ be a  positive integer and let $\Delta t = \dfrac{1}{T}$. We discretize $[0,1]$ uniformly into the $T+1$-tuple given by $t_0 = 0$, $t_1 = \Delta t$, $t_2 = 2 \Delta t,..., t_T = T \Delta t = 1$. Let $p\in (2,\infty)$, and $r\in \left(\dfrac{dp}{p-2},\infty\right)$.  We define $P = C([0,1],L^{r^*})\cap C([0,1],W^{1,r^*})$ and
$$Q_{T} = \{f : f:\{1,...,T\} \to C^{1}_0\},$$ 
where $\frac{1}{r} + \frac{1}{r^*}=1$ and identify elements of $Q_{T}$ as $T+1$-tuples with $f_0 \in L^{r^*}$ and $f_i \in C^1_0$ for $i =1,..T$.  Define the subspaces $\mathcal{U}_{T}$ of $\mathcal{U}$ and $\mathcal{V}_{T}$ of $\mathcal{V}$ by
$$\mathcal{U}_{T} = \{u\in \mathcal{U}: u_{[t_{i},t_{i+1})} = u_i, u_i \in L^r\,\forall i = 1,...,T-1,\,u_{[t_{0},t_{1})} = 0\}.$$ 
$$\mathcal{V}_{T} = \{v\in \mathcal{V}: v_{[t_{i},t_{i+1})} = v_i, v_i \in V\,\forall i = 1,...,T-1,\,v_{[t_{0},t_{1})} = 0\}.$$ 

So by definition, the $r$-th, 2-th powers of the norms of $u,v \in \mathcal{U}_{\Delta t},\mathcal{V}_{\Delta t}$ can be written as a finite sum 
$$\|u\|^r_{\mathcal{U}} = \int_0^1 \|u(t)\|^r_{\mathcal{U}}dt = \sum_{t=1}^{T-1}\|u_i\|^r_{L^p}\Delta t,\quad \|v\|^2_{V} = \int_0^1 \|v(t)\|^2_{V}dt = \sum_{t=1}^{T-1}\|v_i\|^2_{V}\Delta t,$$ 
where $u_i = u_{[t_{i},t_{i+1})},v_i = v_{[t_{i},t_{i+1})}$ for all $i = 1,...,T-1$. So we write $u_i,v_i$ to refer to the $i$-th entry of an element of $\mathcal{U}_{T},\mathcal{V}_{T}$ and regard its elements as $T-1$-tuples in $L^{r}$ and $V$. 
We pick some small constant $\psi > 0$, and define 
\[
\chi(z) = \sqrt{|z|^2 + \psi^2} 
\]
as a smooth approximation of $z\mapsto|z|$. We note that the gradient of $\chi$ is $\nabla \chi(z) = \dfrac{z}{\sqrt{|z|^2+ \psi^2}}$, with $|\nabla \chi(z)| \leq 1$ and its Hessian is
\[
\nabla^2 \chi = \frac{(|z|^2 + \psi^2)I_{\mathbb R^d} - zz^T}{(|z|^2 + \psi^2)^{3/2}} \geq\frac{I_{\mathbb R^d}}{\psi}
\]
where $I_{\mathbb R^d}$ is the $d$-dimensional identity matrix and the inequality is meant for the usual partial order between symmetric matrices (positive semi-definite difference). Since $\chi(0) = 0$, we have $\chi(z) \leq |z|$ for all $z$.



Let $\tau = \sigma \sqrt{\Delta t}$. By \cref{taylorlemma.2}, $h \mapsto \eta_\tau \star h$ is a bounded linear mapping from any $L^q$ to $C^1_0$, and from $C^0_0$ to $C^1_0$.  Define $G: C^1_0 \times L^r \times V \to C^1_0$ by 
\begin{equation}
\label{eq:doptc.G}
    G(f',u',v') = \eta_{\tau}\star f' +\eta_{\tau}\star \left(  \chi(\nabla f') u' -  \nabla f'^T v' + I(f')\right).
\end{equation}
  We verify that $G$  is indeed a bounded operator in the following lemma and will end this section by proving $G$ is Fréchet differentiable.
\begin{Lemma}
    We claim $G$ in in \cref{eq:doptc.G} is a bounded operator. That is, for $f'\in C^1_0$, $u'\in L^{r}$, and $v' \in V$ and for some constant $C'>0$ we have
    \begin{equation}\label{disc_ineq2}
    \| G(f',u',v')\|_{C^1}\leq C' \|f'\|_{C^1}(1+  \|u'\|_{L^r} + \|v'\|_{V})
    \end{equation}
\end{Lemma}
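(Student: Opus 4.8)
The plan is to split $G$ into its four additive pieces and bound each in the $C^1$ norm separately, using \cref{taylorlemma.2} to convert a bound on the argument of $\eta_\tau\star(\cdot)$ into a bound on the $C^1$ norm of the convolution; summing the four estimates and collecting constants then yields \cref{disc_ineq2}. Concretely, I would write
\[
G(f',u',v') = \eta_\tau\star f' + \eta_\tau\star I(f') + \eta_\tau\star(\chi(\nabla f')u') - \eta_\tau\star(\nabla f'^{T} v'),
\]
and note that $\tau=\sigma\sqrt{\Delta t}$ is fixed, so every constant produced by \cref{taylorlemma.2} (each of which carries a factor $\tau^{-2\gamma}$) may be absorbed into the final $C'$.

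For the three terms whose argument is continuous I would check that each argument lies in $C^0_0$ and apply the $C^0_0\to C^1_0$ branch of \cref{taylorlemma.2} (with $l=0$, $k=1$). Indeed $f'\in C^1_0\subset C^0_0$ gives $\|\eta_\tau\star f'\|_{C^1}\le C\|f'\|_{C^0}\le C\|f'\|_{C^1}$. Since $I$ is Lipschitz with $I(0)=0$, one has $|I(f'(x))|\le C_I|f'(x)|$ and $I(f')\in C^0_0$ (it is continuous and vanishes at infinity because $f'$ does), so $\|\eta_\tau\star I(f')\|_{C^1}\le CC_I\|f'\|_{C^1}$. For the convection term, $\nabla f'\in C^0_0$ while $v'\in V$ is continuously embedded in $C^1$ with $\|v'\|_{C^0}\le C_V\|v'\|_V$, so the dot product $\nabla f'^{T} v'$ lies in $C^0_0$ with $\|\nabla f'^{T} v'\|_{C^0}\le C_V\|f'\|_{C^1}\|v'\|_V$, giving $\|\eta_\tau\star(\nabla f'^{T} v')\|_{C^1}\le CC_V\|f'\|_{C^1}\|v'\|_V$. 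Each of these already carries the desired factor $\|f'\|_{C^1}$.

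The remaining term $\eta_\tau\star(\chi(\nabla f')u')$ is the main obstacle, because $u'$ is only $L^r$ and $\chi$ is nonlinear, so its argument need not be continuous and the $C^0_0\to C^1_0$ branch does not apply. Here I would instead place $\chi(\nabla f')u'$ in $L^r=W^{0,r}$ and use the $W^{0,r}\to C^1_0$ branch of \cref{taylorlemma.2} (with $\tilde q=r<\infty$, $l=0$, $k=1$), which is precisely where the hypothesis $r<\infty$ enters. Bounding $\chi(\nabla f'(x))$ pointwise by $\|f'\|_{C^1}$ via the estimate on $\chi$ recorded above, Hölder gives $\|\chi(\nabla f')u'\|_{L^r}\le \|\chi(\nabla f')\|_{C^0}\|u'\|_{L^r}\le \|f'\|_{C^1}\|u'\|_{L^r}$, whence $\|\eta_\tau\star(\chi(\nabla f')u')\|_{C^1}\le C\|f'\|_{C^1}\|u'\|_{L^r}$. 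Adding the four bounds and taking $C'$ to be the largest of the constants produced yields $\|G(f',u',v')\|_{C^1}\le C'\|f'\|_{C^1}(1+\|u'\|_{L^r}+\|v'\|_V)$. The only delicate points are the sup-norm control of the nonlinear factor $\chi(\nabla f')$ and keeping track that each argument sits in the function space required by the correct branch of \cref{taylorlemma.2}.
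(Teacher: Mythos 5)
Your proposal is correct and follows essentially the same route as the paper's proof: the same splitting of $G$ into its four convolution terms, the $L^r\to C^1_0$ branch of \cref{taylorlemma.2} for the $\chi(\nabla f')u'$ term with the sup-norm bound $\|\chi(\nabla f')u'\|_{L^r}\le\|f'\|_{C^1}\|u'\|_{L^r}$, and the $C^0_0\to C^1_0$ branch together with the embedding $\|v'\|_{C^0}\le C_V\|v'\|_V$ and the Lipschitz bound on $I$ for the remaining terms. The only (immaterial) difference is that the paper invokes $C^1_0\to C^1_0$ boundedness for the leading term $\eta_\tau\star f'$ where you use the $C^0_0\to C^1_0$ branch.
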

\begin{proof}
Using \cref{taylorlemma.2}, $f' \mapsto \eta_\tau \star f'$ is bounded and linear from   $C^1_0$ into $C^1_0$.
We have, for some constant $C$,
    $$ 
    \|\eta_{\tau}\star \left(\chi(\nabla f') u'\right)\|_{C^1} \leq C \|\chi(\nabla f') u'\|_{L^r}.
    $$
    Moreover
    \begin{equation}
    \label{eq:lem.doptc.u}
        \|\chi(\nabla f') u'\|_{L^r} \leq \| \chi(\nabla f')\|_{C^0}|u'\|_{L^r}
        \leq \| f'\|_{C^1}|u'\|_{L^r}.
    \end{equation}
    
    For the convective term, we have, for some constant, $C$,
  \begin{equation}
  \label{eq:lem.doptc.v}
       \|\eta_{\tau}\star (\nabla f'^{T} v')\|_{C^1} \leq     C \|\nabla f'^{T} v'\|_{C^0} 
\leq C\| f'\|_{C^1}\ \|v'\|_{C^0} \\
         \leq C_V C \| f'\|_{C^1}\ \|v'\|_{V}.
    \end{equation}
   Lastly the reaction term is bounded as 
   \[
          \|\eta_{\tau}\star I(f')\|_{C^1}  \leq C  \| I(f')\|_{C^0}\leq C_0C_I   \| f'\|_{C^0}.
          \]
   As we add the terms above, we can bound $\|G(f',u')\|_{C^1}$ as claimed.
  \end{proof}
 We define a discrete-time evolution as follows. 
\begin{Proposition}
Let $f_0 \in L^{r^*}$, $u \in \mathcal{U}_{T}$, and $v\in \mathcal{V}_{T}$. Let $f_1 = \eta_{\tau}\star f_0$ and for each $i=1,...,T-1$ define
$$f_{i+1} = G(f_i,u_i,v_i).$$
Then $f_{1:T}\in Q_{T}$, i.e., $f_i\in C^1_0$ for every $i = 1,...,T$.
\end{Proposition}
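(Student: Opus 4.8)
The plan is to argue by a short induction on $i$, using only the two facts already established in this section: the smoothing property of convolution by $\eta_\tau$ from \cref{taylorlemma.2}, and the boundedness of $G$ recorded in \cref{disc_ineq2}. The assertion $f_{1:T}\in Q_T$ unwinds to the single claim that $f_i\in C^1_0$ for every $i=1,\dots,T$, so it suffices to establish a base case and a one-step propagation.

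For the base case I would observe that $f_0\in L^{r^*}=W^{0,r^*}$, and that, since $r\in(dp/(p-2),\infty)$ in this section with $p>2$, we have $r>1$ and hence the conjugate exponent $r^*=r/(r-1)$ is finite, i.e. $r^*\in[1,\infty)$. The first part of \cref{taylorlemma.2}, applied with $l=0$, $\tilde q=r^*$, and $k=1$, then shows that $f_1=\eta_\tau\star f_0$ lies in $C^1_0$, with $\|f_1\|_{C^1}\le C\tau^{-2\gamma}\|f_0\|_{L^{r^*}}<\infty$ for $\gamma=\tfrac{d}{2r^*}+\tfrac12$. The crucial point is that the codomain in \cref{taylorlemma.2} is genuinely $C^1_0$ and not merely $C^1$, so the decay at infinity required for membership in $Q_T$ is supplied automatically.

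For the inductive step, suppose $f_i\in C^1_0$ for some $1\le i\le T-1$. By the definitions of $\mathcal U_T$ and $\mathcal V_T$ we have $u_i\in L^r$ and $v_i\in V$, so the triple $(f_i,u_i,v_i)$ lies in the domain $C^1_0\times L^r\times V$ of $G$. The boundedness of $G$ in \cref{disc_ineq2} then gives $f_{i+1}=G(f_i,u_i,v_i)\in C^1_0$, with $\|f_{i+1}\|_{C^1}\le C'\|f_i\|_{C^1}(1+\|u_i\|_{L^r}+\|v_i\|_V)<\infty$. Since $G$ is built out of convolutions by $\eta_\tau$, which land in $C^1_0$ by \cref{taylorlemma.2}, the output is again a legitimate input for the next application of $G$, and the induction closes. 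Concluding, $f_i\in C^1_0$ for all $i=1,\dots,T$, which is exactly $f_{1:T}\in Q_T$.

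I do not anticipate a genuine obstacle here: the analytic content — that each application of $\eta_\tau\star(\cdot)$ regularizes its (possibly merely $L^r$ or $C^0$) argument into a function in $C^1_0$ — has already been absorbed into \cref{taylorlemma.2} and \cref{disc_ineq2}. The only points requiring care are bookkeeping ones: confirming that $r^*$ is finite so that the $W^{0,r^*}\to C^1_0$ case of \cref{taylorlemma.2} is legitimately available for the first step, and verifying at each stage that the value of $G$ truly lands in $C^1_0$ (rather than a larger space) before it is fed back in. Because $G$ maps into $C^1_0$ by construction, this closure is automatic, and the induction proceeds with no additional estimates.
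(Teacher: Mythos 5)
Your proof is correct and matches the paper's argument, which is simply the observation that $f_1 = \eta_\tau \star f_0 \in C^1_0$ (by \cref{taylorlemma.2}) and that $G$ maps $C^1_0 \times L^r \times V$ into $C^1_0$ by the preceding lemma, so the iteration stays in $C^1_0$. The paper states this in one line; your explicit induction with the exponent bookkeeping ($r^* < \infty$, the choice $l=0$, $\tilde q = r^*$, $k=1$) is the same argument written out in full.
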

\begin{proof}
This is clear, given the previous lemma, since $f_1 = \eta_\tau \star f_0\in C^1_0$. 
\end{proof}
One can show that, for given $f_0 \in Q$ and $u\in \mathcal{U}_{T}$, $v\in \mathcal{V}_{T}$, the function $f$ defined in the proposition above has the property that 
\begin{equation}
\label{eq:dopt.c.f}
    f_{k+1}= \eta_{\tau\sqrt{k+1}}\star f_0  + \sum_{j=1}^{k} \eta_{\tau}\star G(f_i,u_i,v_i) \Delta t,\quad \forall k\geq 2.
\end{equation}
Moreover, one can define a $f\in P$ such that $f(t_k) = f_k$ for every $k = 0,1,...,T$ by defining for $k=0,...,T-1$
$$
f(t_{k} + s ) = \eta_{\sigma\sqrt{s}}\star f_k  +  \eta_{\sigma \sqrt{s}}\star G(f_k,u_k,v_k) s.
$$
\subsection{Variational derivatives of the discrete evolution equation}
In this section we prove the following theorem.
\begin{Theorem}
\label{th:G.C1}
The function $G: C^1_0 \times L^r \times V \to C^1_0$ in \cref{eq:doptc.G} is continuously differentiable in a Fréchet sense. One has
\begin{subequations}
\begin{align}
\label{eq:prt.G.u}
\partial_u G(f,u,v) \delta u &= \eta_\tau \star (\chi(\nabla f) \delta u)\\
\label{eq:prt.G.v}
\partial_v G(f,u,v) \delta v &= -\eta_\tau \star (\nabla f^T \delta v)\\
\label{eq:prt.G.f}
\partial_f G(f,u,v) \delta f &= \eta_\tau \star \delta f + \eta_\tau \star \left(u\nabla\chi(\nabla f)^T\nabla \delta f) - v^T\nabla \delta f + I'(f) \delta f\right)
\end{align}
\end{subequations}
\end{Theorem}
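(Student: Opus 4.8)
The plan is to exploit the structure of $G$ as a composition of the bounded linear convolution operator $L: h \mapsto \eta_\tau \star h$ with nonlinear pointwise maps, and to differentiate term by term. By \cref{taylorlemma.2}, $L$ is bounded and linear from $L^r$ into $C^1_0$ and from $C^0_0$ into $C^1_0$; since a bounded linear operator is its own Fréchet derivative and the chain rule applies to $L\circ N$ whenever $N$ is Fréchet differentiable into the relevant domain of $L$, the differentiability of $G$ reduces to that of the inner maps $f'\mapsto \chi(\nabla f')u'$ (into $L^r$), $f'\mapsto \nabla f'^T v'$ (into $C^0_0$), and $f'\mapsto I(f')$ (into $C^0_0$), together with the linear term $f'\mapsto f'$. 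I would first establish existence of each partial Fréchet derivative, then verify each is continuous in $(f,u,v)$ in operator norm, and finally invoke the standard criterion that joint continuity of the partial Fréchet derivatives implies that $G$ is (totally) continuously Fréchet differentiable, which yields the stated formulas.

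The partials in $u$ and $v$ are immediate because $G$ is affine in each variable: it depends on $u'$ only through $L(\chi(\nabla f')u')$ and on $v'$ only through $-L(\nabla f'^T v')$, so $\partial_u G(f,u,v)\delta u = \eta_\tau\star(\chi(\nabla f)\delta u)$ and $\partial_v G(f,u,v)\delta v = -\eta_\tau\star(\nabla f^T\delta v)$, with boundedness following from \cref{eq:lem.doptc.u,eq:lem.doptc.v}. For the $f$-dependence, the linear term contributes $\eta_\tau\star\delta f$ and the convective term contributes $-\eta_\tau\star(v^T\nabla\delta f)$ directly by linearity. The reaction term $L(I(f'))$ requires Fréchet differentiability of the Nemytskii operator $f'\mapsto I(f')$ from $C^0_0$ into $C^0_0$; since $I\in C^1$ with $I(0)=0$ and $I'$ uniformly continuous (indeed $I'$ has compact support), the increment $I(f+\delta f)-I(f)-I'(f)\delta f$ is $o(\|\delta f\|_{C^0})$ uniformly in $x$, giving the contribution $\eta_\tau\star(I'(f)\delta f)$.

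The main obstacle is the term $L(\chi(\nabla f')u')$, which mixes the smooth nonlinearity $\chi$ composed with $\nabla$ and multiplication by the merely $L^r$ factor $u'$. The key estimate is a second-order Taylor bound for $\chi$: from the Hessian formula in the preceding paragraph, the eigenvalues of $(|z|^2+\psi^2)I_{\mathbb R^d}-zz^T$ are $|z|^2+\psi^2$ and $\psi^2$, so the operator norm of $\nabla^2\chi(z)$ is at most $(|z|^2+\psi^2)^{-1/2}\leq 1/\psi$, whence $|\chi(\nabla(f'+\delta f))-\chi(\nabla f')-\nabla\chi(\nabla f')^T\nabla\delta f|\leq \tfrac{1}{2\psi}\|\delta f\|_{C^1}^2$ pointwise. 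Multiplying by $u'$ and taking $L^r$ norms shows this remainder is $O(\|\delta f\|_{C^1}^2)\,\|u'\|_{L^r}=o(\|\delta f\|_{C^1})$, so the derivative of this inner map is $\delta f\mapsto u'\,\nabla\chi(\nabla f')^T\nabla\delta f$; composing with $L$ produces the $\eta_\tau\star(u\nabla\chi(\nabla f)^T\nabla\delta f)$ contribution to \cref{eq:prt.G.f}.

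Finally, I would check operator-norm continuity of each partial. For $\partial_u G$ and $\partial_v G$ this reduces to continuity of $f\mapsto\chi(\nabla f)$ and $f\mapsto\nabla f$ in $C^0$, which is immediate since $\chi$ is $1$-Lipschitz ($|\nabla\chi|\leq 1$). For $\partial_f G$, the reaction part is continuous in $f$ because $\|I'(f)-I'(g)\|_{C^0}\to 0$ as $\|f-g\|_{C^0}\to 0$ by uniform continuity of $I'$; the $\chi$-part is continuous in $f$ because $\nabla\chi$ is Lipschitz with constant $1/\psi$ (again the Hessian bound) and continuous, being linear, in $u$; the convective part is continuous, being linear, in $v$. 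Joint continuity of all three partials then gives continuous total Fréchet differentiability and the formulas \cref{eq:prt.G.u,eq:prt.G.v,eq:prt.G.f}.
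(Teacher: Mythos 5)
Your proposal is correct and takes essentially the same approach as the paper: both reduce the problem to differentiability in $f$ using linearity of $G$ in $(u,v)$, then differentiate term by term, controlling the Taylor remainder of the reaction term through bounds on $I'$ and that of the $\chi$-term through the Hessian bound $\|\nabla^2\chi(z)\|\leq 1/\psi$, before composing with the bounded linear convolution maps $L^r\to C^1_0$ and $C^0_0\to C^1_0$. Your additional verification that each partial derivative is continuous in operator norm (and the appeal to the criterion that continuous partials yield continuous total Fr\'echet differentiability) goes slightly beyond the paper's own proof, which establishes the derivative formulas and remainder estimates but leaves the continuity assertion implicit.
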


\begin{proof}
From \cref{eq:lem.doptc.u,eq:lem.doptc.v},  $G$ is linear and bounded with respect to $u$ and $v$, so that one only needs to consider its differentiability in $f$

The mapping $f \mapsto \eta_\tau \star f$ is a bounded linear mapping from $C^1_0$ to itself,  hence differentiable. The same is true, using \cref{eq:lem.doptc.v}, for $f \mapsto \eta_{\tau} \star (v^T\nabla f)$ if $v\in V$. 
Let $C'_I$ be an upper-bound of both first and second derivatives of $I$. We have
\[
|I(f+h) - I(f) - I'(f) h| \leq C'_I |h|^2,
\]
Then
\[
\|\eta_\tau \star (I(f+h) - I(f) - I'(f) h)\|_{C^1} \leq C'_I \|h\|_{C^1}^2.
\]
showing that $f\mapsto \eta_\tau \star I(f)$ is differentiable, with differential $h\mapsto \eta_\tau \star(I'(f)h)$.

Finally, for $u\in L^r$, we have 
\[
\|\chi(\nabla f+\nabla h) - \chi(\nabla f) - \nabla\chi(\nabla f)^T \nabla h\|_{C^0} \leq \frac{|\nabla h|^2}{\psi}
\]
and this implies that, for some constant $C$,
\[
\|\eta_\tau\star(\chi(\nabla f+\nabla h) - \chi(\nabla f) - \nabla\chi(\nabla f)^T \nabla h)\|_{C^1} \leq \frac{C}{\psi} \|u\|_{L^r} \|h\|_{C^1}^2.
\]  
\end{proof}

We will need below the expressions of the the adjoints of the partial derivatives of $G$, where $\partial_u G^*$, $\partial_v G^*$, $\partial_f G^*$ and defined on $(C^1_0)^*$ and take values, respectively, in $L^{r^*}$, $V^*$ and $(C^1_0)^*$. For a Banach space $B$, let the pairing of a dual vector $\lambda \in B^*$ and vector $v\in B$ be denoted by $\left( \lambda \mid v\right)$. 
If $\lambda \in (C^1_0)^{*}$, we will use the notation
\[
(\lambda \star \eta_\tau\mid h) = (\lambda \mid \eta_\tau\star h)
\]
and $\lambda \star \eta_\tau$ belongs to $(C^1_0)^*$ and to $L^{q^*}$ for any $q>1$. From eqs. (\ref{eq:prt.G.u}-\ref{eq:prt.G.f}), we get (letting $\nabla\cdot$ denote the divergence operator),
\begin{subequations}
\begin{align}
\label{eq:gstar.u}
\partial_u G(f,u,v)^* \lambda&= \chi(\nabla f) (\lambda \star \eta_\tau)  \\
\label{eq:gstar.v}
\partial_v G(f,u,v)^* \lambda&=  -\nabla f (\lambda \star \eta_\tau)\\
\label{eq:gstar.f}
\partial_f G(f,u,v)^* \lambda&= (1+I'(f))\lambda\star\eta_\tau - \nabla \cdot \left ( (u\nabla\chi(\nabla f) -v) \lambda\star\eta_\tau\right).
\end{align}
\end{subequations}

\subsection{Characterization of Minimizers}
We follow Appendix D.4 of \cite{ShapesDiff} and Chapters 4 and 7 of \cite{InfDimPMP} to formulate a Lagrange multiplier method, which can be used to derive a discrete-time version of Pontryagin's Maximum Principle. For a continuous-time maximum principle, see Appendix D.3.1 of \cite{ShapesDiff} and Chapters 4 and 7 of \cite{InfDimPMP}. 
\begin{Proposition}\label{SoftEnd}
Let $f_0 \in L^{r^*}$ and let $U:C^1_0\to \mathbb{R}$ be continuously Fréchet differentiable, and bounded from below. We seek to minimize $E: Q_{T}\times \mathcal{U}_{T} \times \mathcal V_T \to \mathbb{R}$ given by 
$$
E(f,u,v) = \Delta t \sum_{k=1}^{T-1} (C_{\mathrm{top}}\|u_k\|_{L^r}^p + \|v_k\|_V^p) + U(f_T) $$
subject to the constraint that $f = f(f_0,u,v)$ where
$$f_{k+1} = G(f_{k},u_k,v_k),\, k = 1,...,T-1,\quad f_{1} = \eta_{\tau} \star f_0.$$ 

If $(f,u,v)$ is a minimizer of $E$ subject to this constraint then there is a $\lambda: \{1,...,T\}\to (C^1_0)^*$ such that $(f,u,v,\lambda)$ are critical points of the Lagrangian 
\begin{multline}
\label{eq:lag.softend}
    \mathcal{L}(f,u,v,\lambda) = \sum_{k=1}^{T-1} (C_{\mathrm{top}}\|u_k\|_{L^r}^p + \|v_k\|_V^p)+ U(f_T) 
+    \sum_{k=2}^{T}\left(\lambda_k \mid f_{k} - G(f_{k-1},u_{k-1},v_{k-1})\right) \\
+ \left(\lambda_1 \mid f_1 - \eta_{\sigma \sqrt{\Delta t}} \star f_0\right) 
\end{multline}
\end{Proposition}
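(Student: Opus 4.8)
The plan is to recast the constrained problem as an abstract equality-constrained minimization over a product Banach space and then to invoke the Lagrange multiplier rule (Lyusternik's theorem) for equality constraints in Banach spaces, whose essential hypothesis is the surjectivity of the derivative of the constraint map. Concretely, I would let $X = (C^1_0)^T \times \mathcal{U}_T \times \mathcal{V}_T$ be the space of decision variables $(f_1,\dots,f_T,u,v)$, with the data $f_0 \in L^{r^*}$ held fixed, and let $Y = (C^1_0)^T$. I define the constraint map $\Phi : X \to Y$ componentwise by $\Phi_1(f,u,v) = f_1 - \eta_\tau \star f_0$ and $\Phi_k(f,u,v) = f_k - G(f_{k-1},u_{k-1},v_{k-1})$ for $k = 2,\dots,T$, so that a feasible point is exactly a zero of $\Phi$ and the problem becomes: minimize $E$ on $X$ subject to $\Phi = 0$.

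Next I would verify that $E$ and $\Phi$ are continuously Fr\'echet differentiable. For $\Phi$ this is immediate from \cref{th:G.C1}, which supplies the continuous Fr\'echet differentiability of $G$ together with its explicit partial derivatives; since $\eta_\tau \star f_0$ is a fixed element of $C^1_0$ (by \cref{taylorlemma.2}), one reads off $D\Phi_1(\delta f,\delta u,\delta v) = \delta f_1$ and $D\Phi_k(\delta f,\delta u,\delta v) = \delta f_k - \partial_f G\,\delta f_{k-1} - \partial_u G\,\delta u_{k-1} - \partial_v G\,\delta v_{k-1}$ for $k \geq 2$. For $E$, the only nonlinear pieces are the power-norm terms $\|u_k\|_{L^r}^p$ and $\|v_k\|_V^p$; since $1 < r < \infty$ makes $L^r$ a smooth space and $V$ is Hilbert, and since $p > 2$, both maps are Fr\'echet differentiable everywhere (with vanishing derivative at the origin), while $U$ is assumed continuously differentiable.

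The crux is the constraint qualification, namely that $D\Phi(f,u,v)$ is onto $Y$. I would exploit the triangular (causal) structure of the discrete dynamics: given an arbitrary target $(y_1,\dots,y_T) \in (C^1_0)^T$, set $\delta u = 0$, $\delta v = 0$ and solve the recursion $\delta f_1 = y_1$ and $\delta f_k = y_k + \partial_f G(f_{k-1},u_{k-1},v_{k-1})\,\delta f_{k-1}$ for $k = 2,\dots,T$. Because $\partial_f G$ is a bounded linear operator from $C^1_0$ into $C^1_0$ (each summand in the expression for $\partial_f G$ from \cref{th:G.C1} is a convolution by $\eta_\tau$ applied to a $C^0_0$ or $L^r$ function, hence lands in $C^1_0$ by \cref{taylorlemma.2}), every $\delta f_k$ stays in $C^1_0$, and $D\Phi(\delta f,0,0) = (y_1,\dots,y_T)$. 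Thus $D\Phi(f,u,v)$ is surjective.

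With surjectivity established, $D\Phi(f,u,v)^*$ is injective with closed range by the closed range theorem, so the first-order optimality condition that $DE(f,u,v)$ annihilate $\ker D\Phi(f,u,v)$ gives $DE(f,u,v) \in \mathrm{range}\,D\Phi(f,u,v)^*$; hence there exists $\lambda = (\lambda_1,\dots,\lambda_T) \in Y^* = \bigl((C^1_0)^*\bigr)^T$ with $DE(f,u,v) + D\Phi(f,u,v)^*\lambda = 0$. This is exactly the assertion that $(f,u,v,\lambda)$ is a critical point of $\mathcal{L} = E + \left(\lambda \mid \Phi\right)$, i.e. of \cref{eq:lag.softend}, following the finite-horizon Lagrange multiplier formulation of Appendix D.4 of \cite{ShapesDiff} and Chapters~4 and 7 of \cite{InfDimPMP}. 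I expect the surjectivity step to be the one needing the most careful statement, even though the causal structure renders it essentially automatic; the genuine subtlety there is confirming that $\partial_f G$ preserves $C^1_0$ so the recursion remains in the correct space, while the differentiability of the power-norm cost at possibly vanishing controls is a secondary point that is unproblematic for $p > 2$.
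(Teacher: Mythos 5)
Your proposal is correct and follows essentially the same route as the paper's proof: both formulate the dynamics as an equality constraint $\Gamma(f,u,v)=0$ (your $\Phi$), verify Fr\'echet differentiability of $E$ and of the constraint via \cref{th:G.C1}, obtain surjectivity of the constraint derivative from the triangular structure of the recursion, and then invoke the Banach-space Lagrange multiplier theorem. The only difference is one of detail: you spell out the surjectivity recursion and the closed-range/annihilator step explicitly, whereas the paper states these facts more tersely.
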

\begin{proof}
The mapping $u\mapsto \|u\|^p_{L^r}$ is differentiable on $L^r$ with differential $p \|u\|^{p-r}_{L^r} |u|^{r-1} \mathrm{sign}(u)$ and $v \mapsto \|v\|_V^p$ is differentiable on $V$ with gradient $p \|v\|^{p-2} v$. Since $U$ is assumed to be differentiable, we can conclude that $E$ is differentiable. 

The constraints are $\Gamma (f,u,v) = 0$ where $\Gamma = (\Gamma_1, \ldots, \Gamma_T)$ with 
$$\Gamma_i((f,u,v)) = f_{i} - G(f_{i-1},u_{i-1},v_{i-1}),\,i=2,...,T;\quad \Gamma_1((f,u))= f_1 - \eta_{\tau}\star f_0.$$
\Cref{th:G.C1} states that each $\Gamma_i$ is differentiable, and, since $\Gamma$ is defined through a triangular system, one gets the fact that $\partial_f \Gamma$ is onto. These properties are sufficient for the Lagrange multipliers theorem, as stated in \cref{SoftEnd}, to hold.
   \end{proof} 

We then have the following discrete-time evolution equation for $\lambda$ below. (Compare with (1.8), pp 131 of \cite{InfDimPMP} and D.20, pp of \cite{ShapesDiff}.)
\begin{Corollary}
Critical points of the Lagrangian \eqref{eq:lag.softend} in \cref{SoftEnd} 

are such that the following discrete time evolution holds:
\begin{flalign}\label{de-evolution}
    \lambda_T &= dU(f_T),\quad  \lambda_{k} = -\partial_f G(f_k,u_k,v_k)^* \lambda_{k+1}\quad  k = 1,...,T-1
    \end{flalign}

Moreover, if we set $\tilde{E}:\mathcal{U}_{T}\times \mathcal{V}_{T}\to \mathbb{R}$ as $\tilde{E}((u,v)) = E(f(f_0,(u,v)),(u,v))$, then (setting $f = f(f_0,(u,v))$) we have
\begin{align}
\partial_u \tilde{E}((u,v))_k &= \partial_u \mathcal{L}(f,u,v,\lambda) =  \Delta t C_{\mathrm{top}} p\|u_k\|^{p-r}_{L^r} \cdot u_k^{r-1} - \partial_u G(f_{k},u_k, v_k)^* \lambda_{k+1},\\
\partial_v\tilde{E}((u,v))_k &= \partial_v \mathcal{L}(f,u,v,\lambda) = \Delta t p\|v_k\|^{p-2}_{V} \cdot v_k - \partial_v G(f_{k},u_k,v_k)^* \lambda_{k+1},
\end{align}
for $k=1,...T-1$.
\end{Corollary}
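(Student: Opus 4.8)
The plan is to derive both parts of the statement by imposing first-order stationarity of the Lagrangian $\mathcal L$ of \eqref{eq:lag.softend}, using that $G$ is continuously Fréchet differentiable (\cref{th:G.C1}) and that its partial adjoints are given by \eqref{eq:gstar.u}--\eqref{eq:gstar.f}. Since $\mathcal L$ is built from the differentiable cost (the $L^r$- and $V$-power norms are differentiable, as recorded in the proof of \cref{SoftEnd}), the differentiable terminal penalty $U$, and the constraint pairings $(\lambda_k\mid f_k-G(f_{k-1},u_{k-1},v_{k-1}))$, it is differentiable in each of its arguments, and criticality means every partial derivative vanishes.

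For the backward evolution of $\lambda$, I would compute $\partial_{f_k}\mathcal L=0$ for each $k$ by collecting the terms in which $f_k$ occurs. The state $f_T$ appears only in $U(f_T)$ and in the pairing $(\lambda_T\mid f_T)$, so stationarity in $f_T$ relates $\lambda_T$ to $dU(f_T)$, giving the terminal condition in \eqref{de-evolution}. For $1\le k\le T-1$, the state $f_k$ appears both in the pairing $(\lambda_k\mid f_k)$ and, through the next constraint, inside $-(\lambda_{k+1}\mid G(f_k,u_k,v_k))$; differentiating the latter in a direction $\delta f$ produces $-(\lambda_{k+1}\mid\partial_f G(f_k,u_k,v_k)\delta f)=-(\partial_f G(f_k,u_k,v_k)^*\lambda_{k+1}\mid\delta f)$ by definition of the adjoint. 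Setting $\partial_{f_k}\mathcal L=0$ then expresses $\lambda_k$ through $\partial_f G(f_k,u_k,v_k)^*\lambda_{k+1}$, i.e. the recursion in \eqref{de-evolution}, with $\partial_f G^*$ substituted from \eqref{eq:gstar.f}.

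For the ``Moreover'' part I would use the standard adjoint-method identity. Because the forward system $f_{k+1}=G(f_k,u_k,v_k)$, $f_1=\eta_\tau\star f_0$ is explicit and triangular, the map $(u,v)\mapsto f(f_0,u,v)$ is a finite composition of the $C^1$ maps $G$ (\cref{th:G.C1}) and hence Fréchet differentiable; moreover, along its graph every constraint $\Gamma_k$ vanishes, so $\tilde E(u,v)=\mathcal L(f(f_0,u,v),u,v,\lambda)$ identically, for \emph{any} $\lambda$. Differentiating this identity in $u$ by the chain rule gives $\partial_u\tilde E=\partial_f\mathcal L\cdot\partial_u f+\partial_u\mathcal L$, and similarly in $v$ (the multiplier $\lambda$ being held fixed). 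Choosing $\lambda$ to solve the adjoint recursion \eqref{de-evolution} is exactly the condition $\partial_f\mathcal L=0$, so the sensitivity term $\partial_f\mathcal L\cdot\partial_u f$ drops and we are left with $\partial_u\tilde E=\partial_u\mathcal L$ and $\partial_v\tilde E=\partial_v\mathcal L$.

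It remains to evaluate $\partial_u\mathcal L$ and $\partial_v\mathcal L$ componentwise. Differentiating the cost produces the terms $\Delta t\,C_{\mathrm{top}}\,p\|u_k\|^{p-r}_{L^r}u_k^{r-1}$ and $\Delta t\,p\|v_k\|^{p-2}_V v_k$, while the only constraint pairing containing $u_k$ (resp. $v_k$) is $-(\lambda_{k+1}\mid G(f_k,u_k,v_k))$, whose $u$- and $v$-derivatives are $-\partial_u G(f_k,u_k,v_k)^*\lambda_{k+1}$ and $-\partial_v G(f_k,u_k,v_k)^*\lambda_{k+1}$, read off from \eqref{eq:gstar.u} and \eqref{eq:gstar.v}. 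Assembling these gives the two displayed formulas. The analytic content---differentiability of $G$ and of the cost, and the surjectivity of $\partial_f\Gamma$---has already been established in \cref{th:G.C1} and \cref{SoftEnd}, so the only genuine difficulty is organizational: keeping straight the index shift $k\leftrightarrow k+1$ between state and adjoint, the placement of the $\Delta t$ factors, and the signs fixed by the convention used to write the constraints in \eqref{eq:lag.softend}. This bookkeeping is where I expect the main care to be required.
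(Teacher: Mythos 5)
Your strategy---stationarity of \eqref{eq:lag.softend} in $f$ to obtain the costate recursion, then the adjoint-method identity $\tilde E(u,v)=\mathcal L(f(f_0,u,v),u,v,\lambda)$ for \emph{any} $\lambda$, specialized to the $\lambda$ that kills $\partial_f\mathcal L$---is exactly the standard discrete Lagrange/Pontryagin argument that the paper itself relies on (it gives no explicit proof, deferring to its references), so in method you and the paper agree.

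However, the one step you explicitly postponed, the sign bookkeeping, is precisely where the statement and the computation part ways, and it cannot be absorbed into a convention. Differentiating \eqref{eq:lag.softend} as written: $f_T$ appears in $U(f_T)$ and in $+\left(\lambda_T \mid f_T\right)$, so $\partial_{f_T}\mathcal L=0$ forces $\lambda_T=-dU(f_T)$; and for $1\le k\le T-1$, $f_k$ appears in $+\left(\lambda_k \mid f_k\right)$ and in $-\left(\lambda_{k+1}\mid G(f_k,u_k,v_k)\right)$, so $\partial_{f_k}\mathcal L=0$ forces $\lambda_k=+\,\partial_f G(f_k,u_k,v_k)^*\lambda_{k+1}$. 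These are \emph{not} the signs displayed in \eqref{de-evolution}, and no global substitution $\lambda\mapsto-\lambda$ can reconcile them: flipping $\lambda$ flips the terminal condition but leaves the sign of the homogeneous linear recursion unchanged. (The paper's numerical section confirms the signs just derived: there $\lambda(T)=-\partial_{F(T)}U$ and $\lambda(k)=(\partial_{F(k)}G(k))^*\lambda(k+1)$ with a plus sign.) With this correctly signed $\lambda$, your chain-rule argument for the ``Moreover'' part goes through verbatim and yields the displayed gradient formulas, e.g. $\partial_u\tilde E((u,v))_k=\Delta t\, C_{\mathrm{top}}\, p\|u_k\|_{L^r}^{p-r}u_k^{r-1}-\partial_u G(f_k,u_k,v_k)^*\lambda_{k+1}$; alternatively one may keep $\lambda_T=dU(f_T)$, but then the recursion still carries a plus sign and the $\lambda$-terms in both gradient formulas switch to $+\,\partial_u G^*\lambda_{k+1}$ and $+\,\partial_v G^*\lambda_{k+1}$. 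So: carry out the bookkeeping you deferred, and record that the corollary as printed needs this sign correction (likewise note that \eqref{eq:lag.softend} omits the factor $\Delta t$ on the running cost that $E$ in \cref{SoftEnd} contains, which is where the $\Delta t$ in the gradient formulas comes from).
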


We call the problem described in \cref{SoftEnd} a soft end point minimization optimal control problem.  We use the discrete evolution of $\lambda$ in \cref{de-evolution} in the next numerical results section to numerically find optimal $u,v$ given an initial guess---this is called an adjoint shooting method. 
\section{Numerical simulation}
\label{sec:experiments}
In this section we show some two-dimensional simulations of the spatially discretized version of the above optimal control problem. 

\subsection{Maximum bounded principle enforcing time step }
One valuable property to be preserved by discretizations of Allen-Cahn equations is the maximum bounded principle (MBP), that is, if $-\infty < a = \min_{x\in \mathbb{R}^d}(f_0)$ and $b = \max_{x\in \mathbb{R}^d}(f_0)<\infty$, then $a\leq f(t,x) \leq b$ for all $t\in [0,1]$ and $x\in \mathbb{R}^d$. This is a property enjoyed by the convective Allen-Cahn equation for $u=0$,  \cite{MBP1}  gives an example MBP preserving discretization scheme for this case. The MBP is also enjoyed by a variety of semi-linear parabolic equations; an example of an MBP-preserving discretization scheme is provided by  \cite{MBP2}.

We modify the discrete time evolution presented in the previous section to artificially enforce this property. 

First we present a slightly modify the discrete time evolution, so that we can use properties of the derivatives of convolution. We first remark that the discrete time step $f_{k+1} = G(f_k,u_k,v_k)$ for $k=1,...,T-1$ and $f_1 = \eta_{\sigma \sqrt{t}}\star f_0$ is equal to (still letting $\tau = \sigma \sqrt{\Delta t}$)
\begin{equation}\label{newiter}
 F_{k+1} =  (\eta_{\tau}\star F_k)+  \Delta t \left(\chi(|\nabla \eta_{\tau}\star F_k|) u_k -  \nabla \eta_{\tau}\star F_k^T v_k + I(\eta_{\tau}\star F_k)\right)
\end{equation}
for $k = 1,...,T-1$ and $F_0 = f_0$. The iterations are related by
$f_k = \eta_{\tau} \star F_k$
for all $k=0,...,T$.  We have used the fact that $f_k$, for $k\geq 1$, is in $C^1_0$ so $ \eta_{\tau}\star \nabla f_{k}=\nabla \eta_{\tau}\star f_{k} $, and $\chi$ is the same as before. 

Our spatial discretization in the next section will evaluate $\nabla \eta_{\tau}$ over a matrix of points. But we approximate this forward Euler time step for $F_{k+1}$ in the following way, which enforces the maximum bounded principle; heuristically, we write $F_{k+1} = \tilde{F}_k + \Delta t V_k$. Suppose $a = \inf_{\mathbb{R}^d}(f_0)>-\infty$ and $b= \sup_{\mathbb{R}^d}(f_0)<\infty$. Then we pick a $C^2$, analytic, monotonic increasing function $g: [a,b]\to \mathbb{R}$ and consider the following scheme
\begin{align*}
      F_{k+1} &= g^{-1}\left(g(f_{k}) + \Delta t g'(f_{k}) V_k \right),\\
    V_k& =\chi(|\nabla f_{k}|) -  \nabla f_{k}^T v_k + I(f_{k})\\
    f_k &= \eta_\tau\star F_k.
\end{align*} 
We use a Taylor expansion and the inverse function theorem to show that the update rule is at least as accurate as a first order forward Euler scheme as 
$$g^{-1}\left(g(f_k) + \Delta t g'(f_k) V_k \right) \approx f_k + \Delta t  V_k + O(\Delta t).$$

And by construction, $a\leq F_{k+1}(x),f_{k+1}(x) \leq b$ for all $x\in \mathbb{R}^d$. Note that we do not show that the continuous-time mild solution obeys the maximum bounded principle, but it is a reasonable conjecture and that we aim to tackle in a future work.  So, the Lagrangian we will use is 
\begin{equation}
    \mathcal{L}(F,u,v,\lambda) = \Delta t \sum_{k=1}^{T-1} (C_{\mathrm{top}} \|u_k\|_{L^r}^p + \|v_k\|_{V}^p) + U(\eta_{\tau}\star F_{T}) + \sum_{k=1}^{T-1}\left(\lambda_{k+1} \mid F_{k+1} -\tilde{G}(F_k,u_k,v_k)\right) 
\end{equation}
where ${F}_k$  and $V_k$ are defined above and 
\[
\tilde{G}(F_k,u_k,v_k) = g^{-1}\left(g(\eta_{\tau}\star F_{k}) + \Delta t g'(\eta_{\tau}\star F_{k}) V_k\right).
\]
 Remark, there is a $f\in P$ such that $f(t_k) = f_k$ for every $k = 0,1,...,T$. For $k=0,...,T-1$ and $s \in [0,\Delta t]$ it is given by 
$$
f(t_{k} + s ) =  \eta_{\sigma \sqrt{s}}\star g^{-1}\left(g(f_{k}) + \tau g'(f_{k}) V_k\right)$$
which has the property 
$$f(t_{k} + s) =  \eta_{\sigma \sqrt{s}}\star f_{k} + s \eta_{\sigma \sqrt{s}}\star V_k+ o(s).$$

\subsection{Spatial discretization in two dimensions}
\subsubsection{Grid setup, initial and target data}
Let $L>0$, and $N>0$ be a large odd integer and consider the set of points $(x_i,x_j)_{i,j=1}^N$ where $x_i= -L + \Delta x (i-1)$ and $\Delta x = \dfrac{2L}{N-1}$ is the mesh parameter. This will be our discretized $[-L,L]^2$. Let $T>0$ be an integer and consider the set of points $(t_k)_{k=1}^T$ where $t_k = (k-1)\Delta t$ and $\Delta t= \dfrac{1}{T-1}$. Then for any $V_0,V_1\subset [-L,L]^2$ we define our initial data  $f_0$ as the $N\times N$ matrix with entries $[f_0]_{ij}= \mathbf{1}_{V_0}((x_i,x_j))$ and our target data $f_1$ as the $N\times N$ matrix with entries $[f_1]_{ij} =\mathbf{1}_{V_1}((x_i,x_j))$. Moreover, for $\sigma >0$, $\tau = \sigma \sqrt{\Delta t}$, define the $N\times N$ matrices $M$ and $DM$ by their entries
$$[M]_{ij} =  \dfrac{(\Delta x)^2}{\sqrt{2\pi \tau^2 }}\exp\left(-\dfrac{x_i^2 +  x_j^2}{2\tau^2 }\right),\quad [DM]_{ij} =  -\dfrac{x_i}{\tau^2}\cdot [M]_{ij}.$$
We discretize the convolution operators and their derivatives by taking the following discrete linear convolutions
$$
\eta_{\tau}\star h \approx M \star h((x_i,x_j)),\quad \partial_{x_1}\eta_{\tau}\star h \approx DM \star h((x_i,x_j)),\quad \partial_{x_2}\eta_{\tau}\star h \approx DM^T \star h((x_i,x_j)).$$

\subsubsection{Finite Dimensional Subspaces of the Control Spaces }
Subspaces of $L^r$ and $V$ are built on the grid above but in different fashions.  Define the subspace of simple functions
$$L^r_{N} = \{u\in L^r\,:\, u = \sum_{i,j=1}^N u_{ij} \mathbf{1}_{V_{ij}}, u_{ij}\in \mathbb{R},\forall 1\leq i,j\leq N\},$$
$$V_{ij} = \{(x,y): x_{i}\leq x < x_{i+1}\,,x_{j}\leq y < x_{j+1}\}$$
and define the subspace $$\mathcal{U}_{T,N} = \{u\in \mathcal{U}_{T}\,:\,u(t)\in L^r_{N},\, \forall t \in [0,1]\}.$$
We denote the elements of $\mathcal{U}_{T,N}$ by their constant coefficients $u^k_{ij}$ where $1\leq k\leq T$ and $1\leq i,j\leq N$. Remark that, in this case
\begin{equation}\label{unorm}
    \|u\|_{\mathcal{U}}^p = \Delta t  \sum_{k=1}\left((\Delta x)^2\sum_{i,j = 1}(u^k_{ij})^{r}\right)^{p/r},\quad \forall u\in \mathcal{U}_{T,N}.
\end{equation}
The space $V$ of admissible vector fields $v$ is built as a reproducing kernel Hilbert space by selecting a radial kernel function $K:\mathbb{R}^d\times \mathbb{R}^d \to \mathbb{R}$ (i.e., $K(x,y)= \kappa(|x-y|)$ for some function $\kappa$); see \cite{ShapesDiff}, Chapter 8. We then select a finite dimensional subspace $V_{N}\subset V$ defined as the coupled span of kernel functions centered around each point $(x_i,x_j)$:
$$V_{N} = \left[{\rm span}\left(K((\cdot,\cdot),(x_i,x_j))\,:\,i,j=1,...,N\right)\right]^2 $$
so that the grid points $(x_i,x_j)_{i,j=1}^N$ are collocation points of $V_{N}$. So any velocity vector field $v\in V_{N}$ is written as
$$v(x) = (v^{(1)}(x),v^{(2)}(x))  = \left(\sum_{i,j = 1}^N K(x,(x_i,x_j)) m^{(1)}_{i,j},\sum_{k,l = 1}^N K(x,(x_k,x_l)) m^{(2)}_{k,l}\right)$$
where $(m^{(1)},m^{(2)})$ are the momenta matrices which define the momentum linear functional 
$$m = \left(\sum_{i,j=1}^N m^{(1)}_{i,j}\delta_{(x_i,x_j)}, \sum_{k,l=1}^N m^{(2)}_{k,l}\delta_{(x_k,x_l)}\right)\in V_N^*$$
that canonically represents elements of $V_{N}^*$. Note that $\delta\in C^1(\mathbb{R}^d,\mathbb{R})^*$ is the dirac delta functional defined by $(\delta_{x} , \psi) = \psi(x)$ for $\psi\in C^1$). If $m\in V_{N}^*$ then, using the canonical duality operator $\mathbb{K}:V^* \to V,$ we have $\mathbb{K} m = v$. The norm of $v\in V_{N}$ is determined by $m$ and a matrix $\tilde{K}$. 
\begin{equation}\label{momentum}
\|v\|_{V}^2 = \sum_{k=1}^2\left(m^{(k)}|v^{(k)}\right)= \sum_{k=1}^2\sum_{i,j}^N m^{(k)}_{i,j}v^{(k)}((x_i,x_j))=\sum_{k=1}^2\sum_{i,j}^N m^{(k)}_{i,j}\left(\tilde{K} m^{(k)}\right)_{i,j}
\end{equation}
where 
$$[\tilde{K}]_{i,j} = K((x_i,x_j),(0,0)) = \kappa(|x_i-x_j|).$$
Remark as well that $\|m\|_{V^*}^2 = \|v\|^2_V$. Note that $V_N$ does not have a mesh parameter and is called mesh free, but the grid we chose is called its collocation points.

\subsubsection{Evolution Equation}
For a given $u\in \mathcal{U}_{T, N}$, $m\in \mathcal{M}_{T, N}$, and initial data function $F_0\in L^{r^*}$ consider  the time discrete, spatially ``continuous'' scheme 
\begin{align*}
      F(k+1) &= g^{-1}\left(g(f(k)) + \Delta t g'(f(k)) V(k) \right),\\
    V(k)& =\chi(|\nabla f(k)|) u_k -  \nabla f(k)^T (\mathbb{K}m(k)) + I(f(k))\\
    f(k) &= \eta_{\tau}\star F(k) .
\end{align*} 
This scheme defines a  $(F(k))_{k=1}^T\in Q_{T}$. We discretize this scheme spatially by updating  $N$ by $N$ matrices rather than functions:
\begin{align*}
    f(k+1) & = M\star F(k)\\
 F(k) &= g^{-1}\left(g(f(k)) + \Delta t g'(f(k)) \cdot V(k)\right)\\
    V(k)&= u(k)\cdot \left( (DM \star f(k) )\cdot (DM \star f(k) ) + (DM^T \star f(k) )\cdot(DM^T \star f(k) ) + \Delta t \, \epsilon\, {\rm Id}\right)^{1/2} \\
    &-  v^{(1)}(k) \cdot (DM \star f(k)) -v^{(2)}(k)^k\cdot (DM^T \star f(k) )+ I(f(k))\\
    v^{(n)}(k) &= 
    \tilde{K} m^{(n)}(k), \quad n=1,2
\end{align*}
where we use $A\cdot B$ to denote entry-wise multiplication of matrices $A$ and $B$ with the same shape and take $\epsilon = 10^{-16}$. This numerical scheme is equivalent to mapping the function $F(k+1)$ to $L^{r}_{N}$ through $[F_{ij}(k)=\sum_{i,j=1}^N F(k, (x_i,x_j))\mathbf{1}_{V_{ij}(k)}(x_i,x_j)$ at each time step before computing $f(k+1)$. Remark that we avoid finite difference derivatives and directly compute the derivative of $\eta_{\tau}$ at our grid points within the definitions of $M,DM.$  Within our implementation, we define $g$ as follows. For small $a\geq 0$ and small $\mu >0,$ let $g:[-a,1+a]\to \mathbb{R}$ be given by $g(x) = \dfrac{1}{2} + \mu \log\left(\dfrac{x+a}{1+a-x}\right).$ 

\subsubsection{Objective Function}
 In what follows we view $f_T$ as a function of $f_0\in L^{r^*}$, $u\in \mathcal{U}_{T,N},m\in \mathcal{M}_{T, N}$ and write $f_T(f_0,u,m)$. We define $E:\mathcal{U}_{T,N}\times \mathcal{M}_{T, N}\to \mathbb{R}$  by
$$E(u,v) = C_{\mathrm{top}} \|u\|^p_{\mathcal{U}} + \|v(m)\|^p_{\mathcal{V}} + U(f_T(f_0, u,m)),$$
where we use the formulas \ref{unorm} and \ref{momentum} to compute the first two terms and $U:W^{1,r^*}\to \mathbb{R}$ is a $C^1$ functional that is bounded below by zero. For example, one may take $U(f_T(f_0,u,m))$ as the discretization of $C\|f_T(f_0,u,v) - f_{T}(f_{{\rm target}, v,m}) \|_{W^{1,r^*}}^{r^*}$ where $C>0$ is a large constant. 

\subsubsection{Gradient of E}
To perform gradient descent, one can use a Lagrange multiplier method
\begin{equation}
    \mathcal{L}(F,u,m,\lambda) = C_{\mathrm{top}} \|u\|_{\mathcal{U}}^p + \|v(m)\|_{\mathcal{V}}^p + U(\eta_{\sigma \sqrt{t}}\star F_{T}) + \sum_{k=1}^{T-1}\left(\lambda_{k+1} \mid F_{k+1} -\tilde{G}(F_k,u^k,m^k)\right) 
\end{equation}
to introduce a discrete costate $\lambda:\{1,...,T\}\to M^{N\times N}$ whose evolution is given by
\begin{align}
    [\lambda(T)]_{ij} &= - [\partial_{F(T)} U(f(T))]_{ij}\\
    \lambda(k) &=( \partial_{F(k)} G(k))^* \lambda(k+1),\,k=1,...,T-1\\
    G(k) &=  g^{-1}(g(M \star F(k)) + g'(M \star F(k))V(k))
\end{align}
The derivatives of our objective function $E$ with respect to $u$ and $m$ are given by
$$[\partial_u E(u,m)(k)]_{ij} = p\Delta t C_{\mathrm{top}} \left((\Delta x)^2 \sum_{l,l'=1}^N (u_{ll'}(k))^r\right)^{(p-r)/r} |u_{ij}(k)|^{r-2} u_{ij}(k) - [(\partial_{u}G(k))^* \lambda(k)]_{ij}$$
and for each $n=1,2$
$$[\partial_{m^{(n)}} E(u,m)(k)]{ij} = p\Delta t \left(\sum_{l,l'=1}^N m^{(n)}_{ll'}(k) v^{(n)}_{ll'}(k)\right)^{(p-2)/2}(v^{(n)})_{ij}(k) - [(\partial_{m^{(n)}}G(k))^* \lambda(k)]_{ij}$$
where
$(v^{(n)})_{ij}(k) = \left(
\tilde{K} m^{(n)}(k)\right)_{i,j}.$

\subsection{Experimental results}
By using a LBFGS method with the definition of $E(u,v)$ and $dE(u,v)$, we performed gradient descent with initial guess $u,v=0$ using MATLAB.
We tested several initial and final images, with different or identical topology.
Unless mentioned otherwise, our experiments use $T=30$, $L=1$, $N= 150$, $\sigma = 1/10$ and $C_{\mathrm{top}} = 10^8$. These are referred to, below, as default parameters.

\subsubsection{Sets of discs}
We start with a simple example, in \cref{fig:1Cto2C}, where an original disc shape is transformed into a target that contains two discs. The result of the optimal evolution  is provided with a progressive elongation of the disc, creating a neck that breaks to provide two discs. \Cref{fig:1Cto2Ctop} provides an illustration of the relative impact of the topological and advection controls ($u$ and $v$) when the parameter is changed to $10^6$, $10^8$ (same as in \cref{fig:1Cto2C}) and $10^{10}$. The diffeomorphic transformation becomes larger, but never changes (as expected) the topology of the original shape.

The images in \cref{fig:1Cto2Ctop} must be interpreted with some care. Displays in the second column provide the solution at time $t=1$ of the equation $\partial_t \varphi = v \circ \varphi$. When the effect of the control $u$ on the evolution of $f$ is not negligible, the original image transformed by $\varphi$ is generally behind the one controlled by $u$ and $v$ together. This is illustrated by the red dots stopping short of the target shapes. The transformation of background points (in blue)  completes the description of the  function $\varphi$ and of how it operates on the actual moving shapes. Similarly, the last column in the figure illustrates the overall action of the normal field $u$ had the diffeomorphic transformation been absent. It provides qualitative insights on this part of the process, with, in this case, a combination of horizontal stretching and vertical pinching.

\begin{figure}\label{Top1}
    \centering
    \includegraphics[width=1\linewidth]{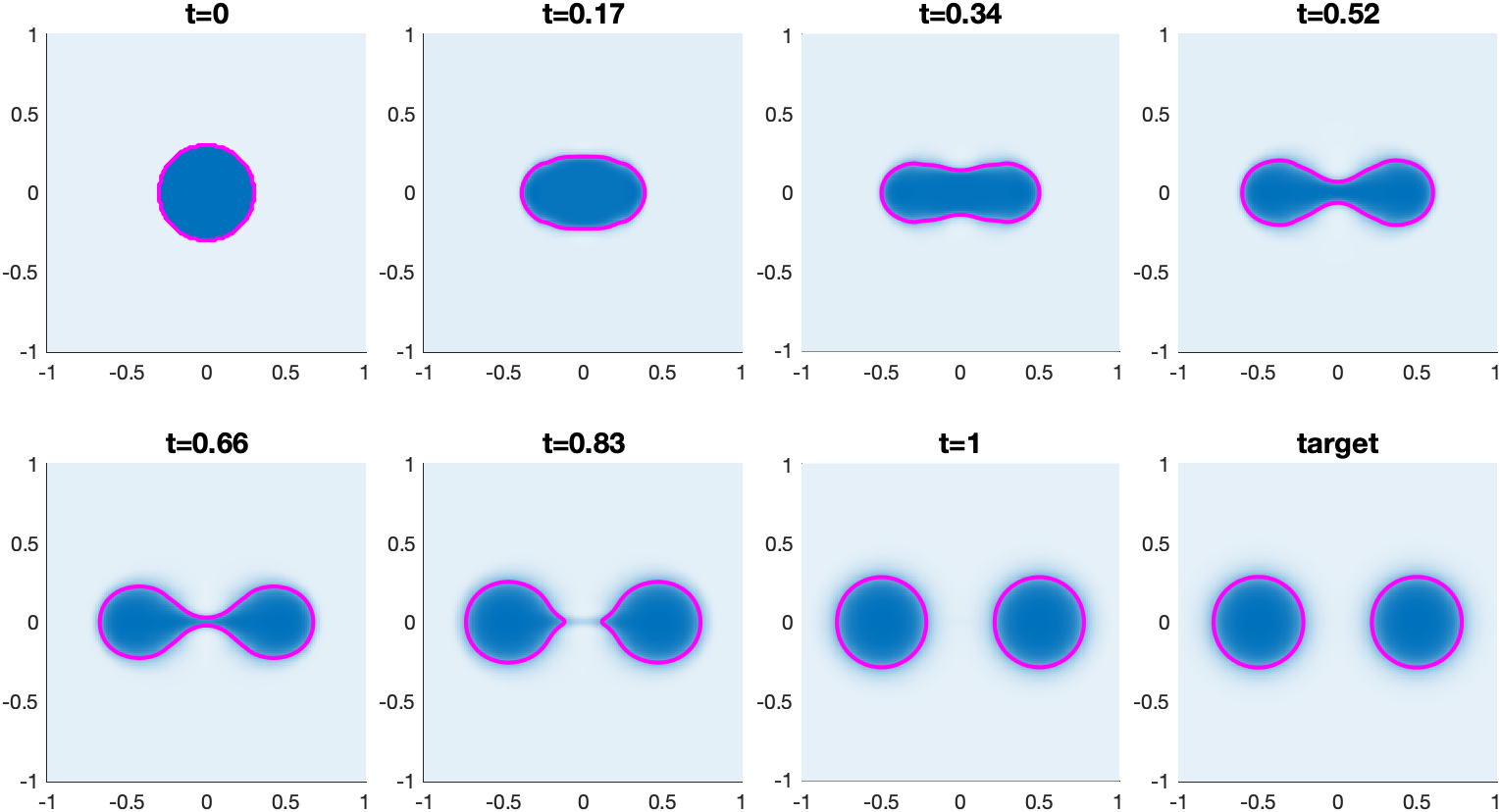}
    \caption{Here our initial shape was the unit disk and the final shape was two unit disks under a Gaussian blur.  Presented is the trajectory of the function f (superimposed with its main contour in magenta) at seven time points  and the last image is the target image. (Using default parameters.)
    \label{fig:1Cto2C}}
\end{figure}

\begin{figure}
\centering
\includegraphics[width=0.8\linewidth]{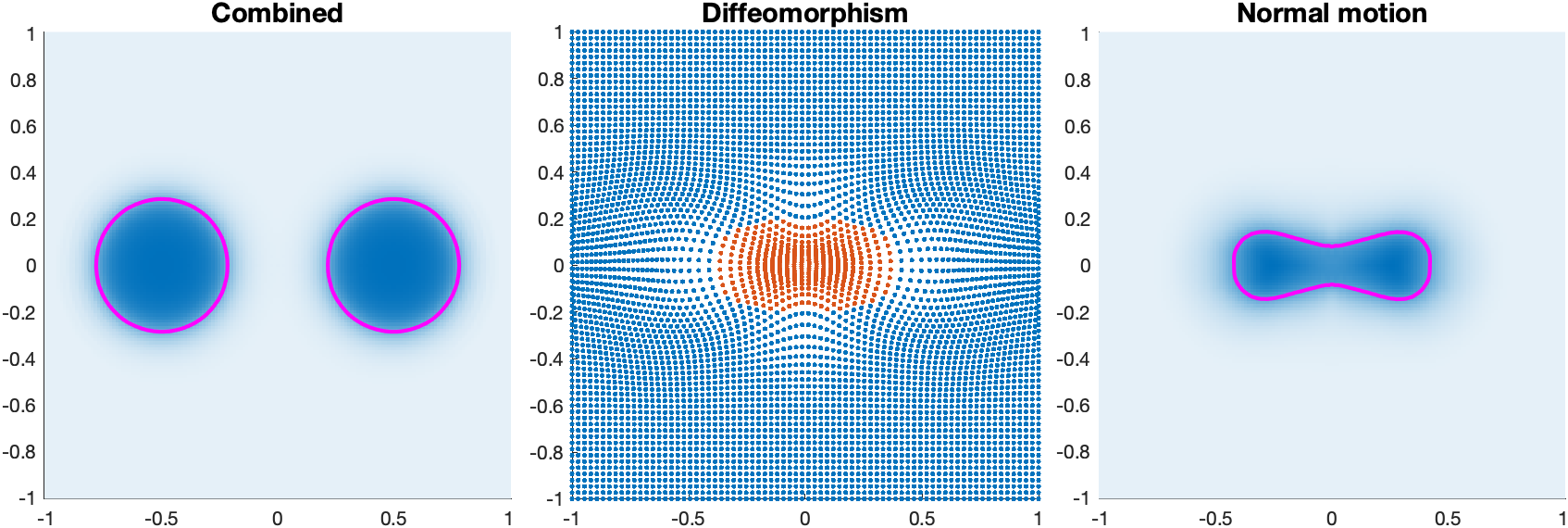}\\
\includegraphics[width=0.8\linewidth]{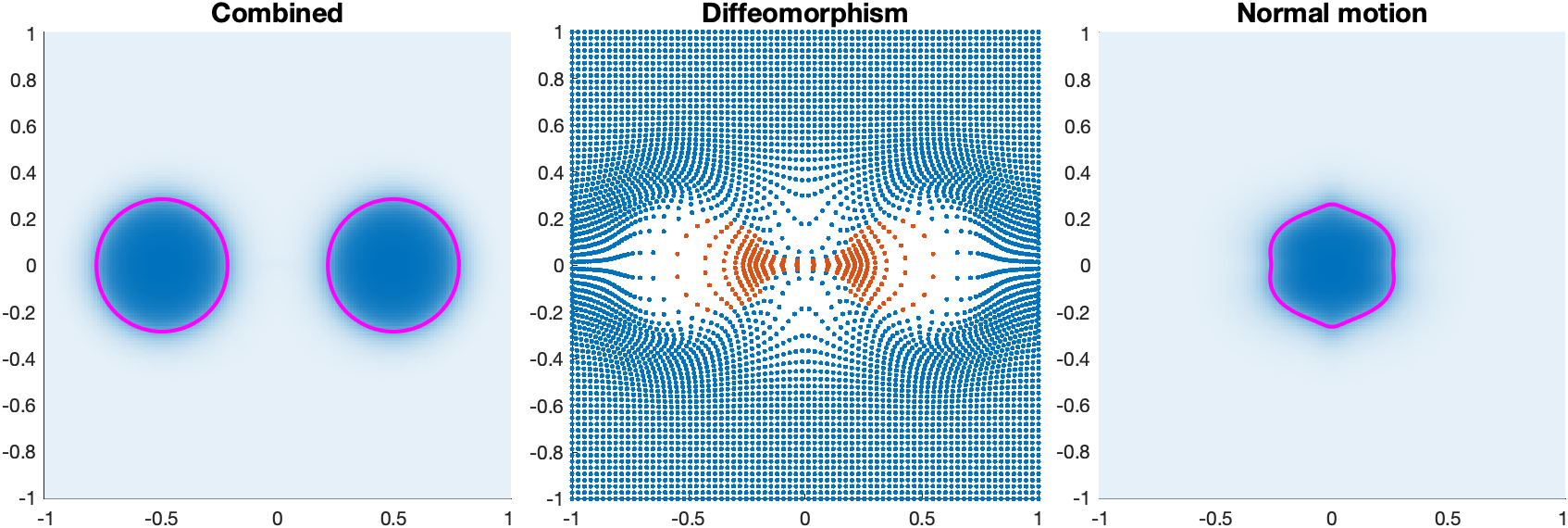}\\
\includegraphics[width=0.8\linewidth]{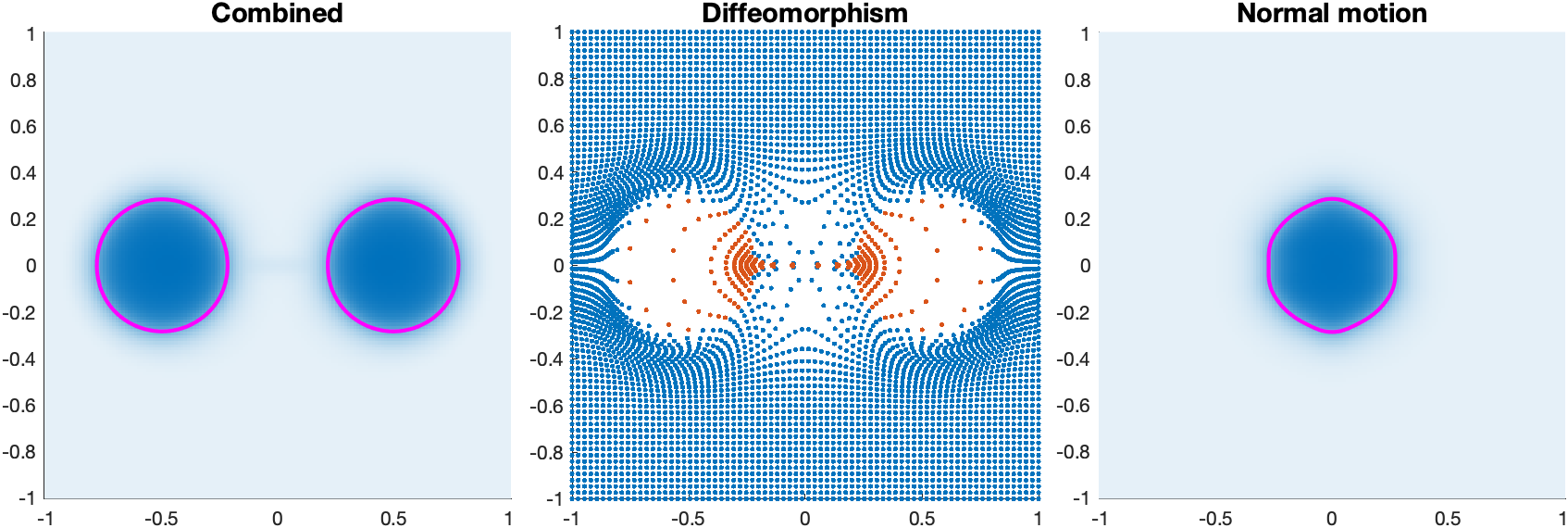}
\caption{Illustration of the impact of the penalty parameter. The rows correspond, from top to bottom, to $C_{\mathrm{top}} = 10^6$, $10^8$ and $10^{10}$, using default parameters otherwise. Each row provides, from left to right, the final image ($t=1$) resulting from the state equation, the diffeomorphism associated to the smooth vector field $v$ (solution to the flow equation at $t=1$), visualized by applying if to a regular grid, with different colors for connected components of the original image,  and the result of only applying the normal motion (setting $v=0$) in the state equation. 
\label{fig:1Cto2Ctop}
}
\end{figure}

Our second example, in \cref{fig:3Cto4C,fig:4Cto3C}, transforms three discs into four and conversely. It first illustrates that our process is not symmetric, as the way two components merges is quite different from the way one component splits. One can nonetheless notice similarities between the deforming shapes at time $t$ for the forward process and $1-t$ for the backward one.
\begin{figure}
    \centering
    \includegraphics[width=1\linewidth]{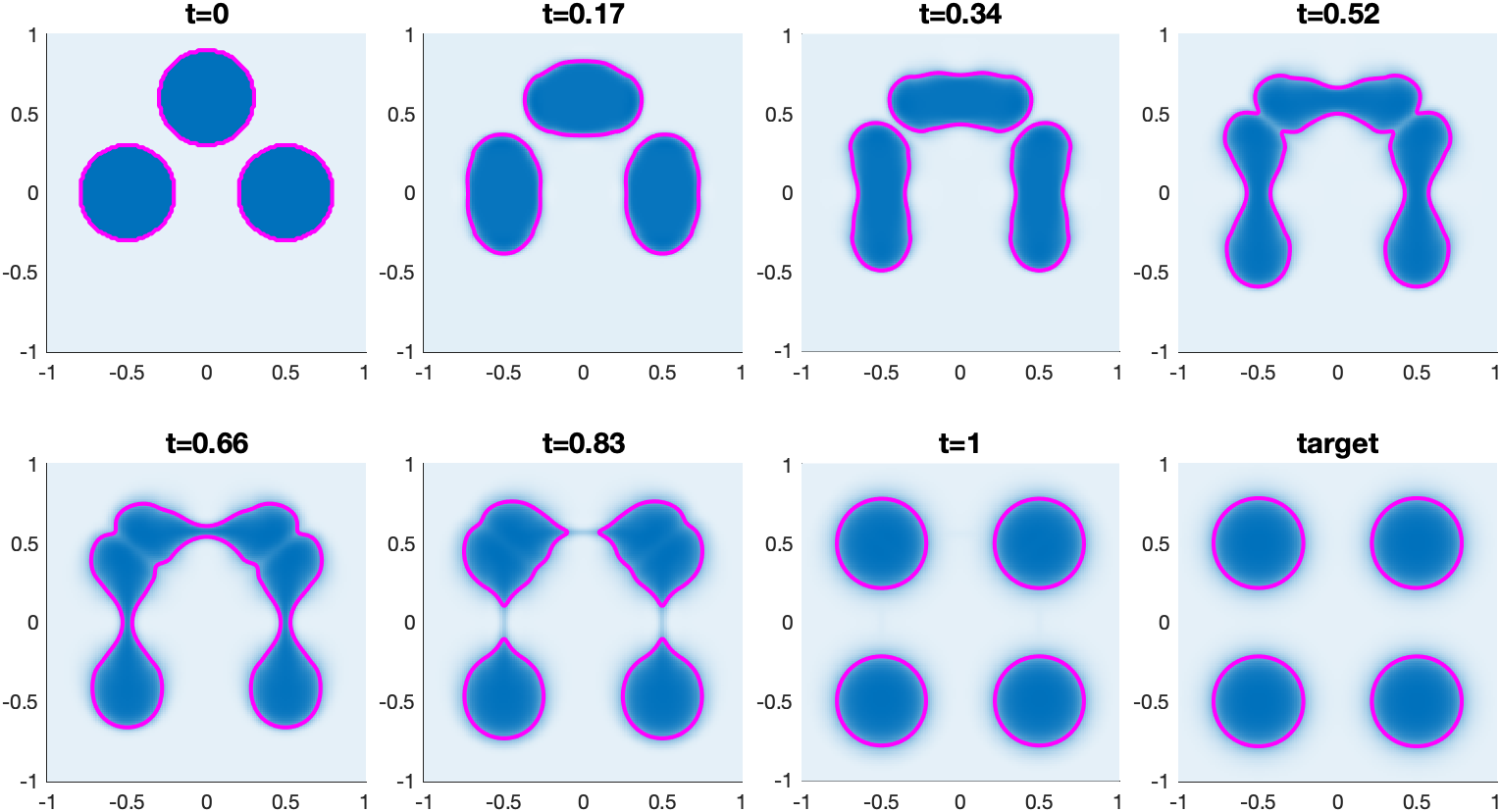}
    \caption{Here our initial shape was three unit disks and the final shape was four unit disks under a Gaussian blur. (Using default parameters.) 
    \label{fig:3Cto4C}}
\end{figure}
\begin{figure}
    \centering
    \includegraphics[width=1\linewidth]{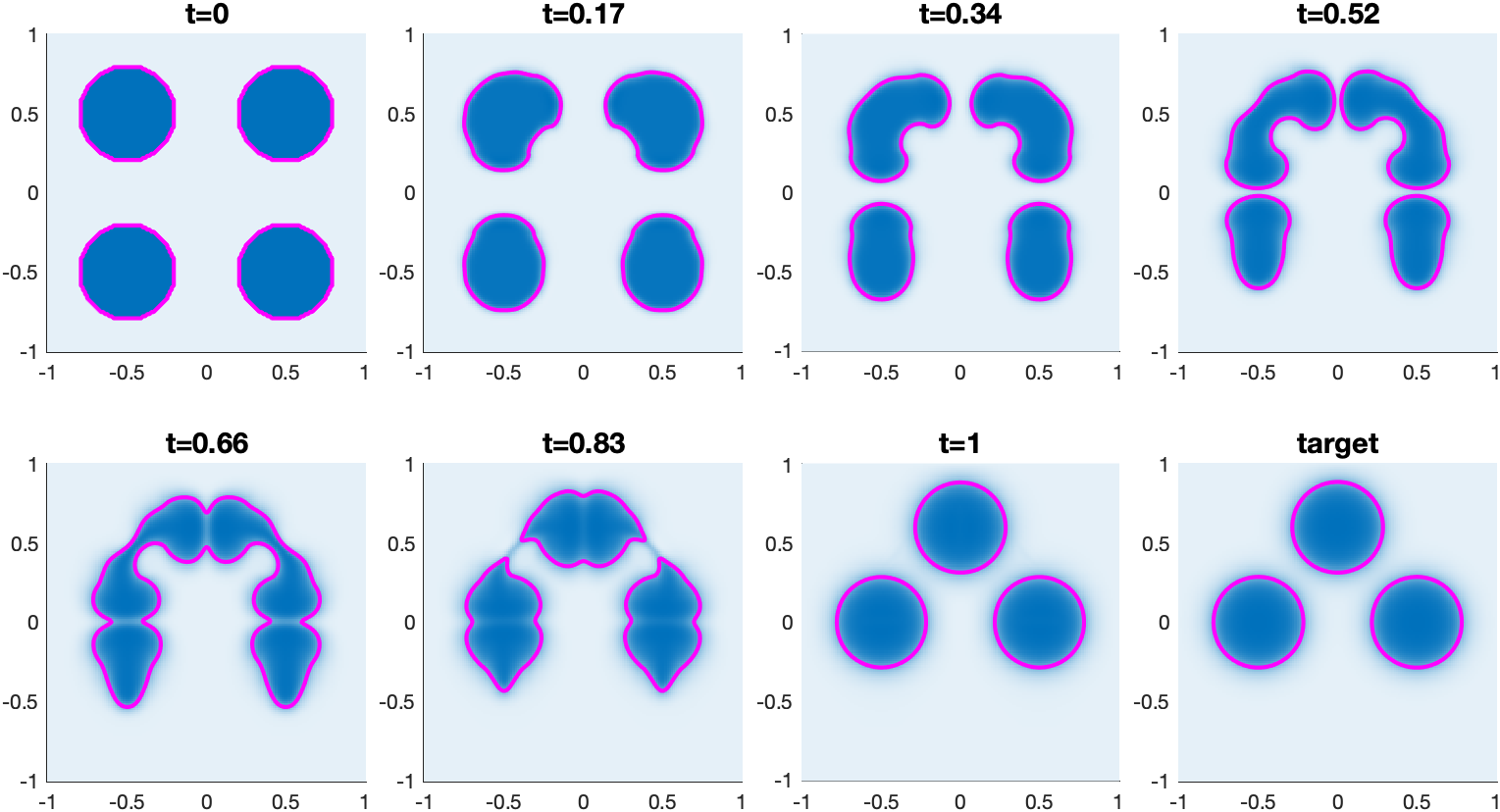}
    \caption{Reverse comparison from that of \cref{fig:3Cto4C}. (Using default parameters.) 
    \label{fig:4Cto3C}}
\end{figure}

\subsubsection{Complex Changes of Topology and Geometry}
The examples in this section depict shapes based on had gestures. The first example, in \cref{fig:okaytoopen}, transformed a hand performing an ``okay'' sign into an open hand. \cref{fig:okaytoopen2} illustrate the respective effects of diffeomorphic and topological changes, where, in this case, most of the motion is operated by the advection, while the normal motion mainly breaks the bridge between the thumb and the index finger.

\begin{figure}
    \centering
    \includegraphics[width=1\linewidth]{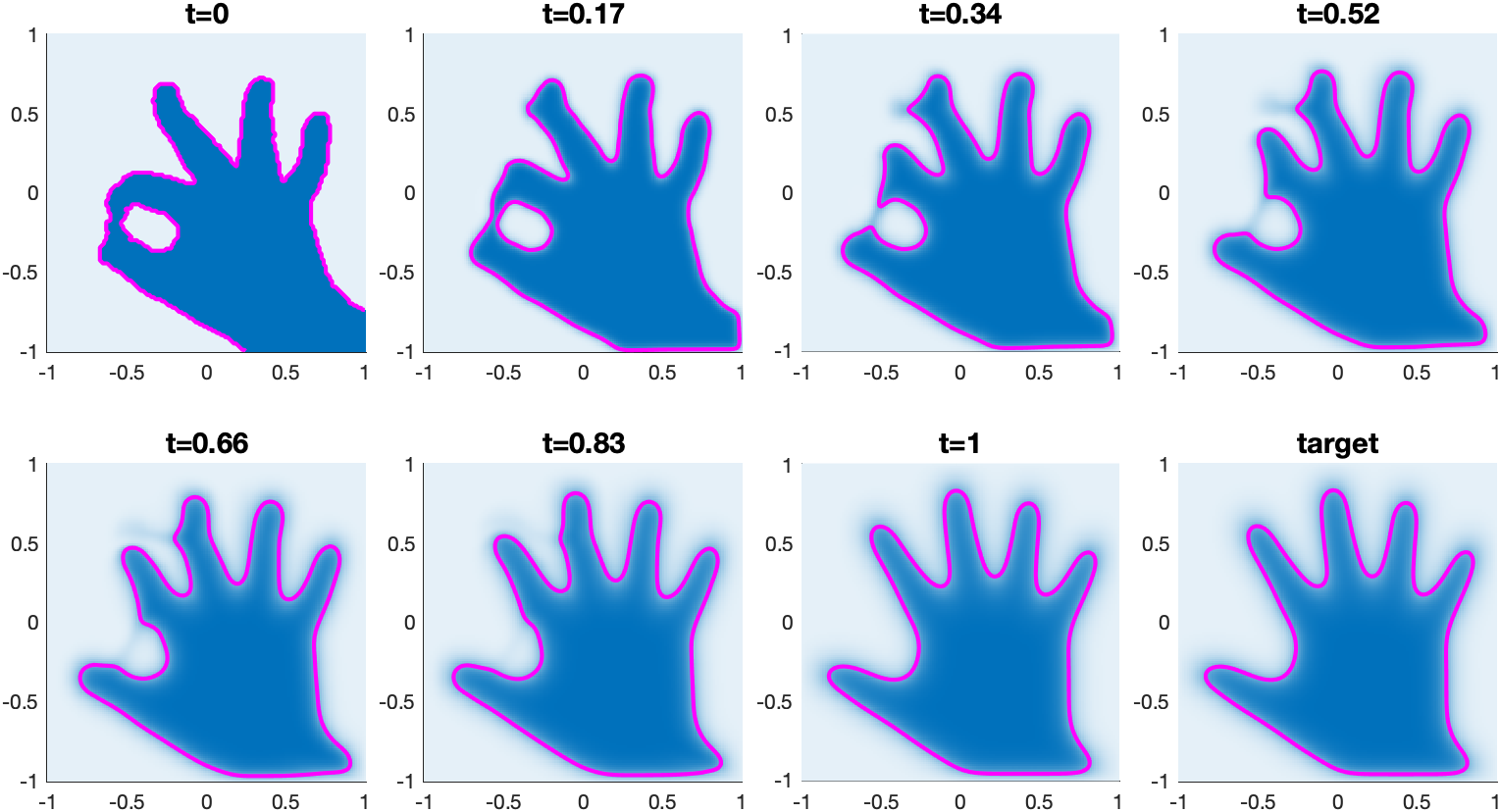}
    \caption{Here our initial shape was an hand giving the 'okay' gesture and the final shape was an open hand. (Using default parameters.)
    \label{fig:okaytoopen}}
\end{figure}

\begin{figure}
    \centering
    \includegraphics[width=1\linewidth]{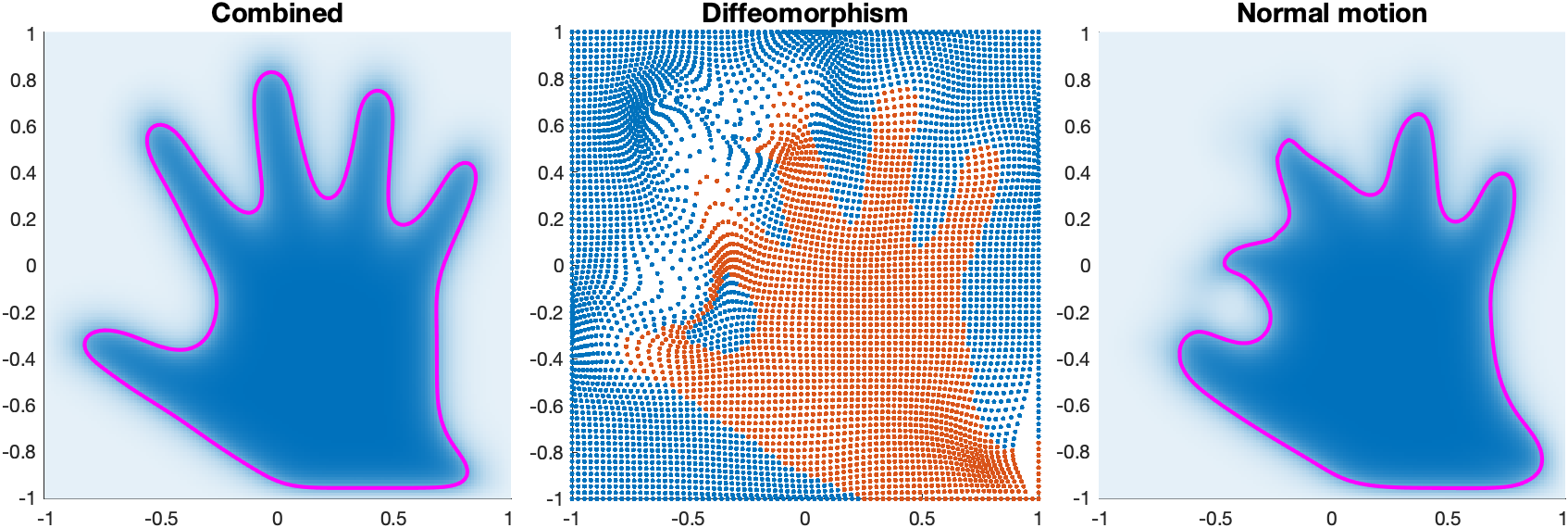}
    \caption{Visualization of the diffeomorphic and topological changes in the example of \cref{fig:okaytoopen}. 
    \label{fig:okaytoopen2}}
\end{figure}

Figures \ref{fig:spockhands1}, \ref{fig:spockhands2} and \ref{fig:spockhands3} compare a hand performing a ``vulcan salute'' in which the hand is parted between the middle and ring finger and an  open hand. Even though the two shapes are topologically equivalent, the trajectories followed by the transformations exhibit significant differences according to the value of the coefficient $C_{\mathrm{top}}$. For $C_{\mathrm{top}} = 10^6$ (\cref{fig:spockhands1}), an indent is created on the middle finger, leading to a hole positioning itself at the separation between the first two fingers. A similar indent is created in \cref{fig:spockhands2}, with $C_{\mathrm{top}} = 10^6$,  but does not lead to a topological change. The indent almost completely disappears with $C_{\mathrm{top}} = 10^{10}$ (\cref{fig:spockhands3}). 

Our last illustration compares a closed hand with a hand with two raised fingers (\cref{fig:rockscissors}).

\begin{figure}
    \centering
    \includegraphics[width=1\linewidth]{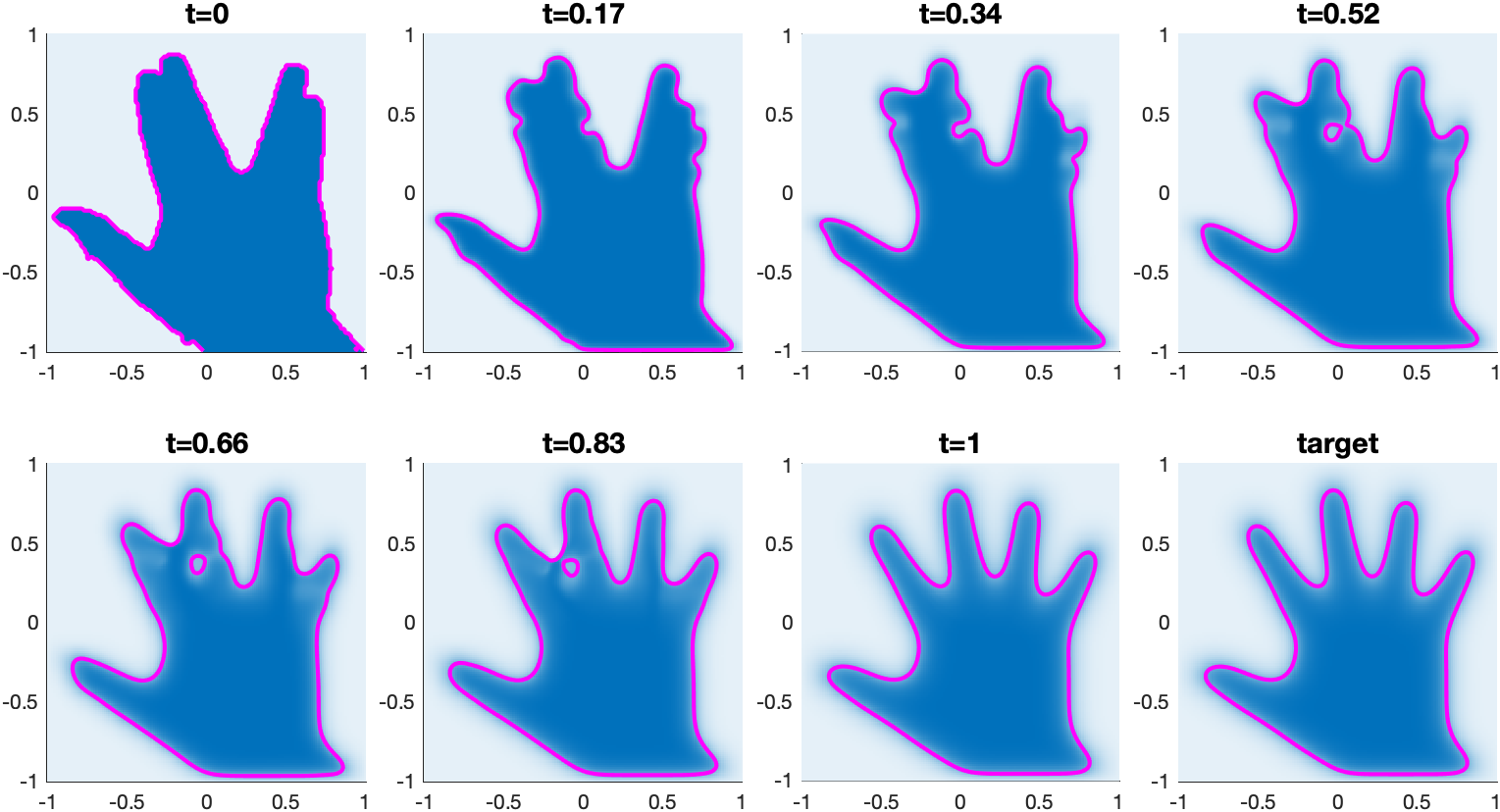}
    \caption{Here our initial shape is a hand with two pairs of fingers attached together and the last shape is an open hand, with $C_{\mathrm{top}} = 10^6$ and default parameters otherwise. 
    \label{fig:spockhands1}}
\end{figure}

\begin{figure}
    \centering
    \includegraphics[width=1\linewidth]{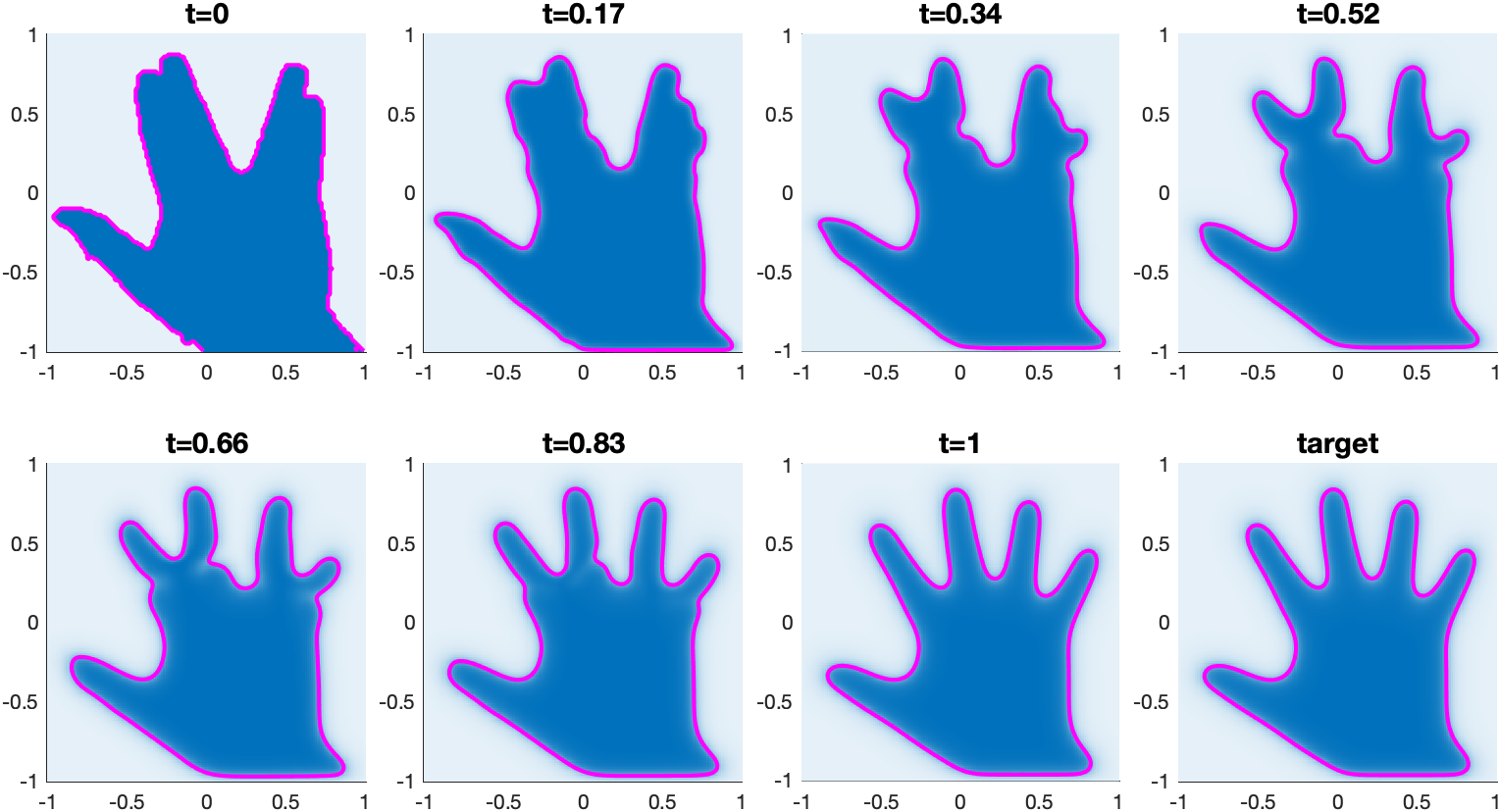}
    \caption{Same as \cref{fig:spockhands1}, with $C_{\mathrm{top}} = 10^8$. 
    \label{fig:spockhands2}}
\end{figure}

\begin{figure}
    \centering
    \includegraphics[width=1\linewidth]{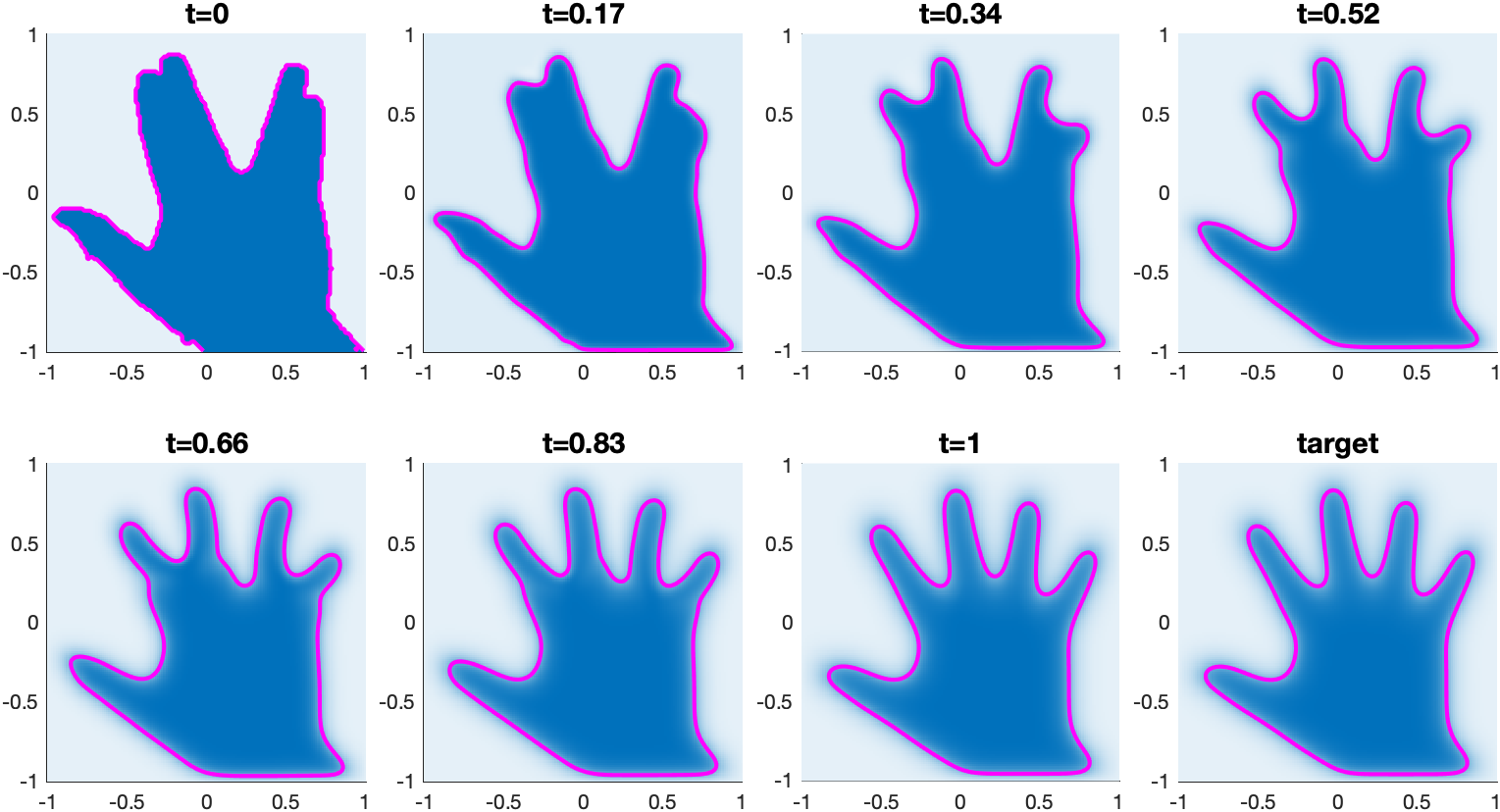}
    \caption{ Same as \cref{fig:spockhands1}, with $C_{\mathrm{top}} = 10^{10}$. 
    \label{fig:spockhands3}}
\end{figure}

\begin{figure}
    \centering
    \includegraphics[width=1\linewidth]{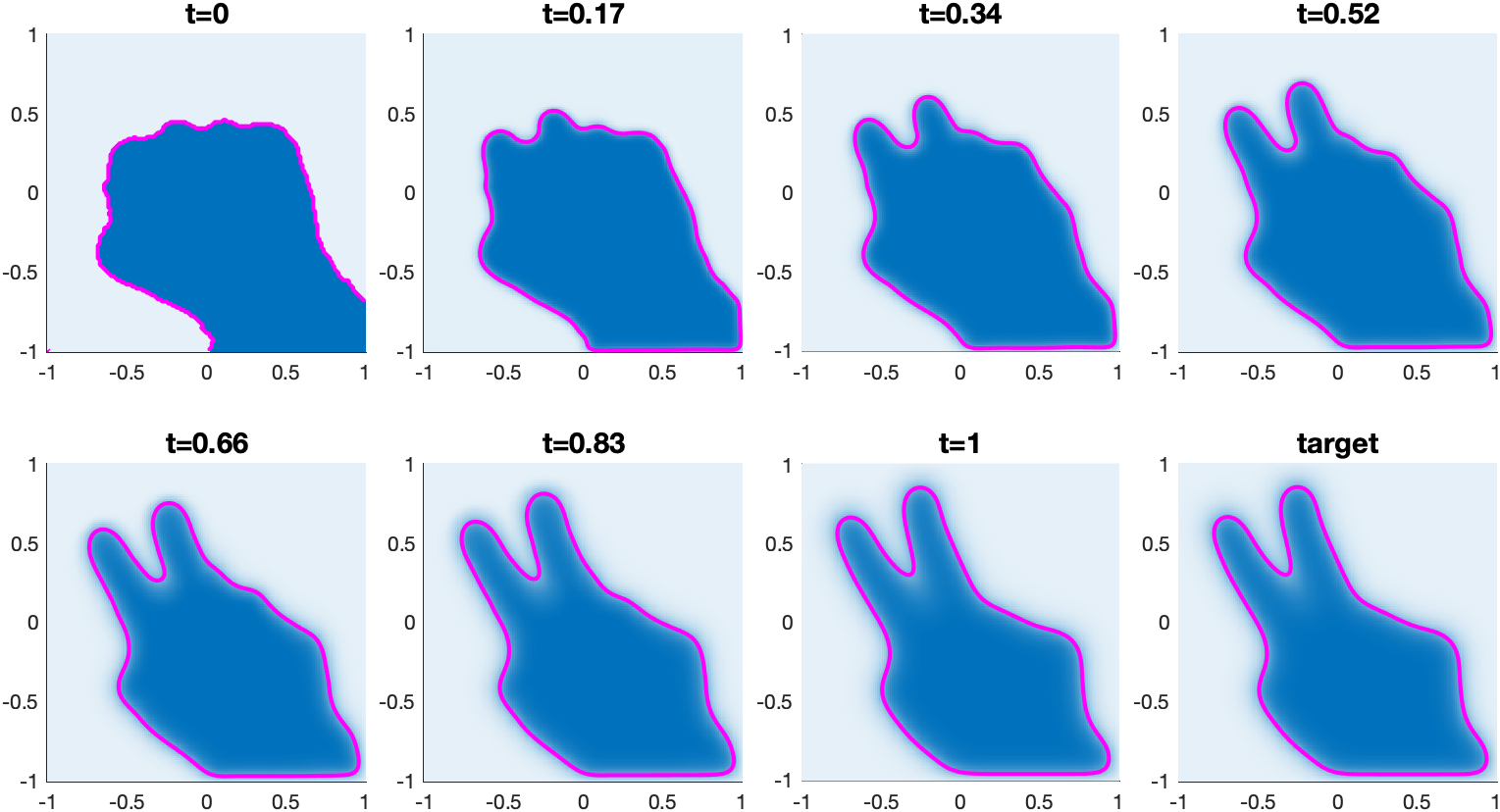}
    \caption{Here our initial shape is a hand with a closed fist and the target shape is a hand with two fingers up, the 'peace' gesture. (Using default parameters.) 
    \label{fig:rockscissors}}
\end{figure}


\section{Summary}
In conclusion, this is a new alignment method that can align any shape’s characteristic function to a target shape’s characteristic function which has undergone mean curvature flow. We prove the existence of solutions given any choice of controls $(u,v)\in \mathcal{U}\times \mathcal{V}$ , set up a discrete time soft end point minimization problem. With a Lagrange Multiplier result we characterized minimizers for the discrete time problem, and demonstrate simulations of this approach. Our numerical simulations also enforce the maximum bounded principle. With $(u,v)=(0,0)$ our numerical simulations demonstrate approximate mean curvature flow. And one can analyze the role of geometric change in shape transformation and topological change by weighing the norms of $u$ and $v$. There are many open questions concerning this type of alignment. In particular what remains is proving the continuity of mild solutions with respect to the controls, the existence and uniqueness of strong solutions, and the existence of minimizers to the soft end point problem. 

\section*{Acknowledgements}
This research was partially funded at the Division of Applied Mathematics at Brown University by the NSF Research Training Grant: Mathematics of Information and Data with Applications to Science, the Brown Graduate School Presidential Fellowship, and Johns Hopkins University while D. Solano worked as an Assistant Research Engineer.

\bibliography{Bibliography}


\end{document}